\newcolumntype{C}[1]{>{\centering\arraybackslash}m{#1}}
\newtheorem{definition}{Definition}
\newtheorem{proposition}{Proposition}
\newtheorem{theorem}{Theorem}
\newtheorem{corrolary}{Corollary}
\newcommand{\review}[1]{\textcolor{black}{#1}}
\newcommand\RR{\mathbb{R}}
\def\R{\mathbb R}
\def\D{{\mathcal D}_{LD}}
\newcommand\NN{\mathbb{N}}
\newcommand\ZZ{\mathbb{Z}}
\def\ep{\varepsilon}
\def\la{\lambda}
\def\La{\Lambda}
\newcommand{\Dh}{\mathcal{D_H}}
\newcommand{\Dld}{\mathcal{D_{LD}}}
\newcommand{\dmin}{\mathcal{L}_{\Upsilon}}
\newcommand{\hg}{{\mathcal L}_g}
\newcommand{\hgg}{{\mathcal L}_{g_\gamma}}
\newcommand{\hgs}{{\mathcal L}_g }
\newcommand{\hq}{{\mathcal L}_Q }
\title{%
% 1 --
%Generalized Legendre Transform Multifractal Formalism for the Estimation of Nonconcave Multifractal Spectra
% 2 --
%Estimation of Nonconcave Multifractal Spectra using a Generalized Legendre Transform Multifractal Formalism
% 3 --
A Generalized Multifractal Formalism for the Estimation of Nonconcave Multifractal Spectra
% 4 --
%On the Estimation of Nonconcave Multifractal Spectra using a Generalized Legendre Transform Multifractal Formalism
}
\author{Roberto Leonarduzzi,~\IEEEmembership{Member,~IEEE}, % <-this % stops a space
Patrice Abry,~\IEEEmembership{Fellow,~IEEE}, %
Herwig Wendt,~\IEEEmembership{Member,~IEEE}, %
\\
Stéphane Jaffard and Hugo Touchette%
\thanks{Manuscript received February, 2018; revised September, 2018.}
\thanks{R. Leonarduzzi and P. Abry are with  Univ Lyon, ENS de Lyon, Univ Claude Bernard, CNRS,
Laboratoire de Physique, F-69342 Lyon, France ({\tt name.lastname@ens-lyon.fr}).}%
\thanks{H. Wendt is with IRIT, CNRS UMR 5505, University of Toulouse,  France ({\tt herwig.wendt@irit.fr}).}
\thanks{S. Jaffard is with  Universit\'e Paris Est, Laboratoire d'Analyse et de Math\'ematiques Appliqu\'ees, CNRS UMR 8050, UPEC,  Cr\'eteil, France, ({\tt jaffard@u-pec.fr}).}
\thanks{H. Touchette is with the National Institute of Theoretical Physics (NITheP) and the
  Department of Physics, Stellenbosch University, Stellenbosch, South Africa ({\tt htouchette@sun.ac.za}).}
\thanks{Work supported by Grant ANR-16-CE33-0020 MultiFracs.}
}
\begin{document}

\maketitle

\begin{abstract}
\boldmath
Multifractal analysis has  become a powerful  signal processing tool  that characterizes signals or images  via the fluctuations of their pointwise regularity,  quantified theoretically by the so-called \emph{multifractal spectrum}.
The practical estimation of the multifractal spectrum fundamentally relies on exploiting the scale dependence of statistical properties of appropriate multiscale quantities, such as \emph{wavelet leaders}, that can be robustly computed from discrete data.
Despite successes of multifractal analysis in various real-world applications, current estimation procedures remain essentially limited to providing \emph{concave upper-bound estimates}, while there is a priori no reason for the multifractal spectrum to be a concave function.
This work addresses this severe practical limitation and proposes a novel formalism for multifractal
analysis that enables nonconcave multifractal spectra to be estimated in a stable way. The key
contributions reside in the development and theoretical study of a generalized multifractal
formalism
to assess the multiscale statistics of wavelet leaders, and in devising a practical algorithm that permits this formalism to be applied to real-world data, allowing for the estimation of nonconcave multifractal spectra. Numerical experiments are conducted on several synthetic multifractal processes as well as on a real-world remote-sensing image and demonstrate the benefits of the proposed multifractal formalism over the state of the art.
\end{abstract}
\begin{IEEEkeywords}
\boldmath
Multifractal analysis, nonconcave multifractal spectrum, wavelet leaders, \review{Legendre transform}, generalized canonical ensemble
\end{IEEEkeywords}

\section{Introduction}
\label{sec:intro}
\subsection{Context}

Multifractal analysis is a  signal and image processing tool that permits to study a function (signal, image) $X:\RR^d\to\RR$ based on properties of its \emph{pointwise regularity}, which is quantified by a pointwise regularity exponent $h(y)$ (such as the H\"older exponent, see, e.g., \cite{Jaffard2004} and Section \ref{sec:Holder}).
More precisely, multifractal analysis characterizes the (temporal/spatial) repartition of $h(y)$
by means of the \emph{multifractal spectrum}, $ \Dh(h)$ (defined as the Hausdorff
dimension of the set of points where $h(y)=h$), that provides a global and geometric description of the \emph{fluctuations of the regularity of $X$} along $y$.

Along the last decade, multifractal analysis has become a standard statistical signal and image
  processing multiscale methodology, massively popularized by its \emph{naturally} being based on
  wavelet transforms, available in most recent and up-to-date signal/image processing toolboxes,
  cf. e.g.,
  \cite{ji2013wavelet,zhong2005image,xia2006morphology,Wendt2007,RLpl32017,Combrexelle2015}.
It has been successfully applied in various contexts for the characterization of real-world data of
different natures, ranging from natural signals (physics \cite{m74}, geophysics
\cite{telesca2011analysis, tessier1993universal}, biology  and biomedical applications
%\cite{Lopes2009,Akselrod1987,Kiyono:2006,soares20133d,Nakamura2016,ppLin2018},
\cite{Lopes2009,soares20133d,Nakamura2016},
neurosciences
\cite{Ciuciu2009,CIUCIU:2012:A,He2010}), to man-made signals (Internet traffic
%\cite{Abry1998,riedi1999multifractal,Abry2002,ToNet2017},
\cite{riedi1999multifractal,Abry2002,ToNet2017},
finance \cite{mandelbrot99},
art investigations \cite{ABRY:2013:B,AbrySPM2015}), to name but a few.

In practice, for discrete data, $\Dh(h)$ cannot be estimated
from its definition because neither  $h(y)$ nor  Hausdorff dimensions can be computed in a stable way.
Instead, use must be made of theoretical connections between $\Dh(h)$ and the statistics of suitable multiscale coefficients $T_X(a,y)$ computed from $X$, i.e., quantities that jointly depend on position $y$ and scale $a$.
Different $T_X$ were proposed in the literature, e.g., increments, oscillations, wavelet or multifractal-detrended fluctuation analysis coefficients %\cite{muzy1993multifractal,muzyetal94,Arneodo2003a,kantelhardt2002multifractal}
\cite{muzy1993multifractal,kantelhardt2002multifractal}\ and, recently,  wavelet leaders
% and wavelet $p$-leaders
\cite{Jaffard2004,pLeadersPartI2015,pLeadersPartII2015}.
Wavelet leaders are specifically designed to meet theoretical requirements for multifractal analysis and will be used in this work. % (cf., Section \ref{sec:leaders};
The choice of  $T_X$  has been studied elsewhere, e.g., \cite{Wendt2007}, and will not be further discussed here. % see, e.g., \cite{rlaj05,jla06}.

The purpose of multifractal analysis is to establish a link between the multifractal spectrum $\Dh(h)$ of $X$ and the way the statistics of the quantity
$h(a,y)= \log(|T_X(a,y)|) / \log(a) $
depend on scale $a$ as $a\to0$.
Importantly, in practice, the quantities $h(a,y)$ can be robustly computed from $X$.
There exist several ways, reviewed in the next paragraph, that relate the multiscale statistics of  $h(a,y)$ to the theoretical multifractal spectrum $\Dh(h)$.

\subsection{State of the art}
\label{sec:stateoftheart}

\noindent{\bf Large deviations.} The large deviation principle and the Gartner-Ellis theorem \cite{ellis1984large} allow to establish a connection between $\Dh(h)$ and $h(a,y)$, as historically first put into light by B. Mandelbrot and collaborators (cf., e.g.,  \cite{Riedi03,Jaffard2004}).
Let $D(h,a,\epsilon)=\log\big(\textnormal{card}\{h(a,y) : h-\epsilon\leq h(a,y)\leq
h+\epsilon\}\big)/ \log a$;
the \emph{large deviation  spectrum} associated with the multiscale quantity $T_X(a,y)$ is defined as the double limit $\Dld(h)\triangleq\lim_{\epsilon\rightarrow 0}\limsup_{a\rightarrow 0} D(h,a,\epsilon)$.
It is well known that  $ \Dld$  yields an upper bound for  $\Dh$:  $\Dld(h) \geq \Dh(h)$ $\forall h$ \cite{Riedi03,Jaffard2004}, which suggests its use as an estimate of the multifractal spectrum.
It is, however, also  well documented that  taking the double limit in the definition of $\Dld$ is numerically difficult if not impossible to handle, so that the large deviation spectrum remains rarely used in practice, cf. \cite{barralgoncalves} and references therein.
One notable attempt for computing $\Dld$ was proposed in \cite{barralgoncalves}.
However, it relies on oscillations for $T_X$, and on a specific coupling of the rates of the two limits,
{which is theoretically grounded for the concave parts of  multifractal spectra  only.}

\noindent{\bf Multifractal formalism.}  As a robust alternative to the large deviation attempt, the \emph{multifractal formalism}  \cite{ParFri85,Riedi03,Jaffard2004} provides another upper-bound estimate for $\Dh$, the so-called \emph{Legendre spectrum} $\mathcal{L}$.
It is fundamentally based on the sample moments of order $q$ of $|T_X(a,y)| $, and consists in taking the Legendre transform
$  \mathcal{L}(h)\triangleq \inf_q(d+qh-\zeta(q))$ of the function
$\zeta(q) = \liminf_{a\rightarrow 0}\log \left(\frac{1}{n_a}\sum_{y=1}^{n_{a}}
|T_X(a,y)|^q\right)/\log a $.
The Legendre spectrum $\mathcal{L}$ provides a conceptually simple and numerically stable way to estimate $\Dh$, and is widely used in practice.
This multifractal formalism, using wavelet leaders as multiscale quantities $T_X(a,y)$, constitutes one of the state-of-the-art methods for estimating multifractal spectra, cf., e.g., \cite{Wendt2009,Wendt2007}.

However, as detailed in Section~\ref{sec:mff},
$\mathcal{L}$  yields a poorer bound, compared to $\Dld$, for $\Dh$:
$\mathcal{L}(h)\geq\Dld(h) \geq \Dh(h)$.
Further, the main conceptual and practical limitation of using $\mathcal{L}$ arises from the fact that it can only provide \emph{concave} estimates for $\Dld$, hence for $\Dh$.
%; indeed, it being defined as a Legendre transform, $\mathcal{L}(h)$ is necessarily concave \cite{rockafellar1997convex}.
This constitutes a strong practical restriction, since there is  a priori no reason for $\Dh$, nor for $\Dld$, to be concave functions. Thus, the practical estimation of nonconcave multifractal spectra currently constitutes one of the open challenges in multifractal analysis:
Theoretically, there are well-defined stochastic processes (such as Levy motions with Brownian components) for which $\Dh$ is known to be nonconcave;
practically, estimating nonconcave spectra could permit to detect that empirical observations actually mix several phenomena of different origins, and are hence characterized by the supremum of independent (possibly concave) spectra (cf. Section \ref{sec:results}  for examples).

\noindent{\bf Robust statistics.} While the limitations of the Legendre spectrum have long been recognized,
numerically robust procedures that allow to obtain stable nonconcave estimates for nonconcave
multifractal spectra have been proposed only recently: The quantile spectrum, $\hq$~\cite{Abel}, and
the leader profile method~\cite{esser_acha_2017}.
Both methods rely on wavelet leaders and on a modification of the large deviation spectrum
using alternative ways for studying the distributions of $h(a,y)$.
In essence, the leading idea is to replace $D(h,a,\epsilon) $ % \eqref{equ:DLD0}
with robust statistics (such as quantiles).
%This enables to practically discard the limit in $\epsilon$, at the price of requiring
%the construction of numerical estimates for an unknown offset term for each position $y$ instead. \sj{ the end of this sentence is unclear for me}
These approaches permit to estimate the { \em increasing hull} (resp. the decreasing hull) of $\Dld$, i.e., the  least increasing (resp. decreasing) function larger than $\Dld$, hence yielding
$ \mathcal{L}(h ) \geq \hq(h)  \geq \Dld(h)   \geq \Dh(h) $.
However, they cannot work for ranges of $h$  where  $ \Dld(h)$  has a local minimum, which is at the heart of the current work.

\subsection{Goals and contributions}
The goal of the present work is to construct and study a \emph{generalized multifractal formalism} that permits the numerically stable estimation of potentially \emph{nonconcave large deviation spectra}.
The originality of the proposed solution consists in tackling the conceptual limitation of the multifractal formalism to concave estimates directly at its origin, i.e., in modifying the Legendre transform underlying the multifractal formalism (which is recalled in Section \ref{sec:mff}).
%To that end,
%\review{use is made of a generalized version of the  Legendre transform},
\review{The procedure is } inspired from a general ensemble thermodynamic
formalism \review{in statistical physics} (cf.
\cite{touchette2006,costeniuc2006generalized,costeniuc2005generalized,touchette2010b} and preliminary results in
\cite{LeonarduzziSSP2016}). %, and allows nonconcave multifractal spectra to be estimated.

The main contributions of this work are the following.
First, we develop the theoretical principle of this generalized multifractal formalism in the
context of wavelet leaders (cf. Sections \ref{Sec:glt} and \ref{sec:genmf}), and prove that it
preserves the computational advantages of the Legendre-transform-based multifractal formalism  while permitting the
estimation of nonconcave multifractal spectra, thus  yielding tight bounds for $\Dld(h)$ (hence for $\Dh(h)$). %  than \eqref{equ:LH}.
%The principle is further illustrated with a simple, theoretical example.
%
Second, we construct the corresponding practical algorithm, the \emph{generalized multifractal formalism}, that can be used to robustly compute accurate numerical estimates for multifractal spectra of general shape from finite-resolution data (cf. Section \ref{sec:GMFFest}).
Third, we  study and validate numerically the proposed generalized multifractal formalism  for
several examples of synthetic (1D) signals and (2D) images with known and prescribed multifractal
spectra, of both concave and nonconcave shapes, and compare them with the Legendre spectrum and
quantile spectrum methods (cf. Section \ref{sec:results}).
The results indicate that the proposed method yields excellent estimates for multifractal spectra, be they of concave or nonconcave shapes.
Finally, we illustrate the use of the formalism for the multifractal analysis of
a real-world satellite image.
The corresponding {\sc matlab} routines will be made available at the time of publication.

\section{Multifractal Analysis}
\label{sec:MFA}

\subsection{Pointwise regularity and multifractal spectrum}
\label{sec:Holder}

\noindent{\bf Local regularity: H\"older exponent.} Let $X(y),\;y \in \RR^d$, denote the signal, image or function to analyze.
Hereafter, we assume that $X$ is locally bounded.
This assures that pointwise H\"older regularity, used here to characterize the fluctuations of regularity in $X$, is well defined and that  {\em wavelet leaders}, the corresponding multiscale quantity, cf. \eqref{wavlead}, also are well defined (see, e.g., \cite{Wendt2009,Abel,pLeadersPartI2015} for details and ways to relax this condition).
The function $X$  belongs to the pointwise Hölder space $C^{\alpha}(y)$ if there exist $K>0$ and a polynomial $P_{y}$ (with $\makebox{deg}(P_{y})
<\alpha$) such that
$|X(y+x)-P_{y}(x)|\leq K|x|^{\alpha}$ for $x \rightarrow 0$. The \emph{Hölder exponent} of
$X$ at $y$ is defined as
\begin{equation}
\label{eq:holder}
h(y)\triangleq\sup\{\alpha\::\:X\in C^{\alpha}(y)\}
\end{equation}
and quantifies the regularity of $X$ at $y$ (see, e.g., \cite{Jaffard2004}): the smaller (larger) $h(y)$, the ``rougher'' (smoother) $X$ is at $y$.

\noindent{\bf Global description: Multifractal spectrum.} The goal of multifractal analysis is to study the repartition of the pointwise regularity  $h$ along time (or space) $y$.
Importantly, this is not achieved locally by the function $h(y)$ itself, but rather globally and geometrically via the multifractal spectrum, defined as the fractal (Hausdorff) dimension of the sets of points where the H\"older exponent takes a given value $h$
\begin{equation}
\label{equ:defDh}
 \Dh(h)\triangleq\text{dim}_{\text{H}}(\{y\in\RR^d:h(y)=h\}) \leq d,
\end{equation}
where $\text{dim}_{\text{H}}$ denotes the Hausdorff dimension (by convention,
$\text{dim}_{\text{H}}(\emptyset)=-\infty$) \review{\cite{Jaffard2004,ParFri85,Riedi03}}.
%As detailed in Section \ref{sec:intro}, its practical computation implies the use of a multifractal
%formalism, relying on suitable multiresolution quantities, such as wavelet leaders.

\subsection{Wavelet leaders and pointwise regularity}
\label{sec:leaders}

\noindent{\bf Wavelet coefficients.} Let $\{\psi^{(i)}\}_{i=1,\ldots,2^d-1}$ be a family of mother wavelets,
i.e., oscillating functions with fast decay, good joint time-frequency
localization, and such that the collection $\{2^{dj/2}\psi^{(i)}(2^jy-k),\;i=1,\ldots,2^d-1,\;j\in\ZZ,\;k=(k_1,\dots,k_d)\in\ZZ^d\}$  of dilated (to scale $a=2^{-j}$)
and translated versions of $\psi^{(i)}$ is an orthonormal basis of
$L^2(\RR^d)$.
The functions $\psi^{(i)}$ guarantee a number of vanishing moments $N_{\psi}\in\NN$, i.e.,
$\int y^k\psi^{(i)}(y)dy=0$ for $k=0,\cdots,N_{\psi}-1$.
The discrete wavelet coefficients of $X$ are defined as:
$c^{(i)}_{j,k}=2^{dj/2}\int_{\RR^d}X(y)\psi^{(i)}(2^{j}y-k)dy$, c.f., e.g., \cite{Mallat1998} for details.\\
%\sj{ why not use the usual normalization ?}
\noindent{\bf Wavelet leaders.} Let $\lambda_{j,k}=\bigl[2^{-j}k_1,2^{-j}(k_1+1)\bigr)\times\cdots\times\bigl[2^{-j}k_d,2^{-j}(k_d+1)\bigr)$
denote a dyadic cube of width $2^{-j}$ at position $2^{-j}k$, and $3\lambda_{j,k}$ denote the union of $\lambda$ and its $3^d-1$ closest neighbors.
% Each wavelet coefficient can be associated with one dyadic cube: $c_{\lambda}^{(i)}=c_{j,k}^{(i)}$.
% Let  $3\lambda=3\lambda_{j,k}$ denote the union of $\lambda$ and its $3^d-1$ neighbors, and $\lambda(y)$ the only cube at scale $j$ that includes $y$.
The \emph{wavelet leaders} are defined as \cite{Jaffard2004}
\begin{equation}   \label{wavlead}
L_{j,k}\triangleq \sup_{i, \lambda_{j',k'} \subset 3\lambda_{j,k} } 2^{dj/2}|c_{j',k'}^{(i)}|,
\end{equation}
that is, as the supremum of the $L^1$-normalized discrete wavelet coefficients in a narrow time neighborhood
of $y=2^{-j}k$ {for all finer scales} $j'\geq j$.\\
\noindent{\bf Wavelet leaders and pointwise regularity.}  Further, let $\lambda_{j,k_y}$ denote the only cube at scale $2^{-j}$ that includes $y$.
The local decay rate of wavelet leaders reproduces the Hölder exponent in the limit of fine scales as \cite{Jaffard2004}
\begin{equation}
\label{eq:leader_decay}
h(y) = \liminf_{j\rightarrow\infty}\frac{\log_2 L_{\lambda_{j,k_y}}}{-j}.
\end{equation}

\subsection{Wavelet leaders and spectra}
\label{sec:leadersmf}

\subsubsection{Large deviation spectrum}
\label{sec:lds}

As sketched in the introduction, the large deviation spectrum $\Dld$, here formally defined on wavelet leaders, permits to approximate the Hausdorff spectrum $\Dh$.
Motivated by \eqref{eq:leader_decay}, let us define
\begin{equation}
\label{equ:hxat}
h(y,2^{-j})=\frac{\log_2 L_{\lambda_{j,k_y}}}{-j}.
\end{equation}
Then, the large deviation spectrum is defined as
\begin{equation}
\label{equ:DLD0}
D(h,2^{-j},\epsilon)=\frac{\log_2\!\big(\textnormal{card}\{h(y,2^j) \!: h\!-\!\epsilon\leq h(y,2^j)\leq h\!+\!\epsilon\}\big)}{-j},
\end{equation}
\begin{equation}
\label{equ:DLD}
\Dld(h)\triangleq\lim_{\epsilon\rightarrow 0}\limsup_{2^{-j}\rightarrow 0} D(h,2^{-j},\epsilon).
\end{equation}
Importantly, in general, spectra $\Dh$ or $\Dld$ need not  be continuous functions.
However, while the Hausdorff spectrum $\Dh$ can be a function of any shape (and as general as an arbitrary  supremum of  a countable family of continuous functions \cite{CR-SJ-Jeunesse}), $\Dld $ is,  by construction, upper-semicontinuous.

%{ \bf Proposition: }
\begin{proposition}
The large deviation spectrum $\Dld $, as defined in \eqref{equ:DLD}, is an upper-semicontinuous function on $\R \cup  \{+  \infty\} $, satisfying:
$ \forall h, h_0 \in \R, \forall \ep  >0$, $ \exists \delta >0$, if $|h-h_0| \leq \delta$, then $ \Dld(h) \leq
\Dld(h_0) + \ep$.
\end{proposition}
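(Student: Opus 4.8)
The assertion is precisely the upper-semicontinuity of $\Dld$, written in $\ep$--$\delta$ form, with $\delta$ allowed to depend on $h_0$ and $\ep$ (upper-semicontinuity, not continuity). The only preliminary I would use is the elementary monotonicity of the counting functional: for fixed $j$, enlarging the window $[h-\ep,h+\ep]$ cannot decrease the cardinality it counts, so $D(h,2^{-j},\ep)$ in \eqref{equ:DLD0} is non-decreasing in $\ep$, and hence so is $\ep\mapsto\limsup_{j\to\infty}D(h,2^{-j},\ep)$. Therefore the limit in \eqref{equ:DLD} is a decreasing limit, and
\[
\Dld(h)=\inf_{\ep>0}\ \limsup_{j\to\infty}D(h,2^{-j},\ep).
\]
Since there are at most $2^{dj}$ dyadic cubes of scale $2^{-j}$ per unit cube, one also has $D(h,2^{-j},\ep)\le d$, so $\Dld\le d$; the only infinite value that can occur is $-\infty$, when the counted set is eventually empty, and the choice below handles that case explicitly.

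Fix $h_0\in\R$ and $\ep>0$. The heart of the proof is a window-nesting bound: if $|h-h_0|\le\delta$, then for every $\ep'>0$ one has $[h-\ep',h+\ep']\subseteq[h_0-(\ep'+\delta),\,h_0+(\ep'+\delta)]$, whence
\[
\textnormal{card}\{y:h(y,2^{-j})\in[h-\ep',h+\ep']\}\ \le\ \textnormal{card}\{y:h(y,2^{-j})\in[h_0-(\ep'+\delta),h_0+(\ep'+\delta)]\}
\]
for every $j$, i.e.\ $D(h,2^{-j},\ep')\le D(h_0,2^{-j},\ep'+\delta)$. Taking $\limsup_{j\to\infty}$ and then letting $\ep'\to0$ on both sides --- on the left this limit is $\Dld(h)$ by \eqref{equ:DLD}, and on the right $\ep'\mapsto\limsup_{j\to\infty}D(h_0,2^{-j},\ep'+\delta)$ is non-decreasing, so its limit equals its infimum over $\ep'>0$, i.e.\ $\inf_{\eta>\delta}\limsup_{j\to\infty}D(h_0,2^{-j},\eta)$ --- I obtain
\[
\Dld(h)\ \le\ \inf_{\eta>\delta}\ \limsup_{j\to\infty}D(h_0,2^{-j},\eta)\qquad\text{whenever }|h-h_0|\le\delta .
\]

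It then only remains to choose $\delta$. Because $\Dld(h_0)=\inf_{\eta>0}\limsup_{j\to\infty}D(h_0,2^{-j},\eta)$, there exists $\eta_0>0$ with $\limsup_{j\to\infty}D(h_0,2^{-j},\eta_0)\le\Dld(h_0)+\ep$ (if $\Dld(h_0)=-\infty$, read this as ``$\le M$'' for a prescribed $M\in\R$ and run the argument for every $M$). Any $\delta\in(0,\eta_0)$ now works: since $\eta_0>\delta$, the displayed bound gives $\Dld(h)\le\limsup_{j\to\infty}D(h_0,2^{-j},\eta_0)\le\Dld(h_0)+\ep$ for all $h$ with $|h-h_0|\le\delta$, which is the claim. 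There is no real obstacle here --- the statement is ``soft'' --- but two points require attention: the two limits must be taken in the right order (first fix $\ep'$, apply the nesting bound at the level of $\limsup_{j\to\infty}$, and only afterwards optimize over $\ep'$, equivalently over $\eta=\ep'+\delta>\delta$), and the resulting $\delta$ genuinely depends on $h_0$, as it must for a merely upper-semicontinuous function that may jump up at isolated points; the monotonicity in $\ep$ recorded at the outset is exactly what legitimizes writing $\Dld$ as an infimum and carrying out this interchange.
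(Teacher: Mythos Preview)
Your proof is correct and rests on the same idea as the paper's: if $|h-h_0|\le\delta$ then the $\ep'$-window around $h$ is contained in the $(\ep'+\delta)$-window around $h_0$, hence the corresponding counts and $D(\cdot,2^{-j},\cdot)$ values are ordered. The paper argues this by extracting a subsequence $j_n$ along which the $\limsup$ defining $\Dld(h')$ is (almost) attained and then comparing cardinalities directly, whereas you first rewrite $\Dld=\inf_{\ep>0}\limsup_j D(\cdot,2^{-j},\ep)$ via monotonicity and then pick $\delta<\eta_0$ from the infimum; these are two phrasings of the same window-nesting argument, with your version handling the order of limits and the choice of $\delta$ more explicitly.
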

The proof is postponed to Appendix~\ref{sec:AppA}.

\subsubsection{Multifractal formalism and Legendre transform}
\label{sec:mff}

Also inspired from \eqref{eq:leader_decay}, the multifractal formalism is fundamentally based on the evolution across scales of the $q$th sample moments of $L_{j,k}$ (with $n_j$ the number of $L_{j,k}$ at scale $j$)
\begin{equation}
\label{sg}
S(q,j)\triangleq\frac{1}{n_j}\sum_{k=1}^{n_j}\left(L_{j,k}\right)^q,
\end{equation}
often referred-to as the \emph{structure function}.
Let the \emph{scaling function} (or \emph{scaling exponents}) be defined as
\begin{equation}
\label{equ:zetaq}
\zeta(q) \triangleq \liminf_{j\rightarrow +\infty}\frac{\log_2 S(q,j)}{-j},
\end{equation}
i.e., $S(q,j)\sim 2^{-j\zeta(q)}$, $j\rightarrow  + \infty$.
The scaling function can be related to $\Dld$, by combining the following heuristic arguments (cf, e.g., \cite{ParFri85,Riedi03,Jaffard2004}):
From (\ref{equ:DLD}), there are roughly $\sim 2^{j\Dld(h)}$ cubes $\lambda_j$ of width $2^{-j}$ which cover
locations $y$ where $h(y)=h$, and, owing to \eqref{eq:leader_decay}, each of these contributes  to $S(q,j)$ as $\sim 2^{-jq  h(y)}$. Therefore, since $n_j \sim 2^{dj}$, $S(q,j)\sim \sum_h 2^{-j(d+q h-\Dld(h))}$.
In the limit of fine scales the smallest exponent dominates, it hence follows that
\begin{equation}
\zeta(q) = \Dld^{\star}(q) \triangleq \inf_h \left\{ d+q h -\Dld(h) \right\},
\label{eq:zeta_Dh}
\end{equation}
i.e.,  $\Dld^{\star}$ is the  \emph{Legendre (or Legendre-Fenchel) transform} % (LT)
of $\Dld$, cf.,  \cite{rockafellar1997convex,Riedi03,Jaffard2004}.
The Legendre transform satisfies (cf., e.g., \cite{rockafellar1997convex, costeniuc2005generalized}):\\
\indent\emph{\underline{Property 1.}} $f^{\star}(h)$ is always a concave function of $h$.\\
\indent\emph{\underline{Property 2.}} Let $f^{\star\star} = (f^{\star})^{\star} $ % \triangleq\inf_{y\in\RR}\{d + xz - f^{\star}(z)\}$
be the double Legendre transform of $f(h)$,  thus $f^{\star\star}(x)\geq f(x)$
with equality when $f$ is concave.%  is the concave hull of $f$. %  so that $f^{\star\star}(x)\geq f(x)$. %  with equality if and only if $f(x)$ is concave at $x$, where .

Remarkably, while the $\liminf$ in \eqref{eq:leader_decay} cannot practically be replaced by a $\lim$ so that pointwise exponents cannot be robustly estimated by log-log plot regressions, it usually turns out to be the case after a space averaging as performed by $S(q,j)$. Thus, robust estimates of $\zeta$ can be practically obtained, as well as of the so-called \emph{Legendre spectrum} $ \mathcal{L}$ (using \emph{{Property 2}})
\begin{equation}
\label{equ:leg_spec}
\mathcal{L}(h) \triangleq (\Dld)^{\star\star}(h) = \zeta^{\star}(h) = \inf_q(d+qh-\zeta(q)).
\end{equation}
The Legendre spectrum $\mathcal{L}$ provides a \emph{robust} and \emph{numerically stable}
estimate of $\Dh$  because the scaling function $\zeta$ can be assessed numerically in a
robust manner, by linear regressions of $\log_2 S(q,j)$ versus $j$, cf., e.g., \cite{Wendt2007} and
Section \ref{sec:GMFFest}.
However, from Properties 1 and 2, it follows that $\mathcal{L}$ always is a concave function, regardless of the shape of $\Dld$
\begin{equation}
\label{equ:leg_spec_1}
\mathcal{L}=  \zeta^{\star} = ( \Dld)^{\star \star }  \geq   \Dld \geq  \Dh .
\end{equation}

\section{Generalized Multifractal Formalism}

\subsection{Generalized Multifractal Formalism: Principle}
\label{Sec:glt}

\subsubsection{Definition and properties}

Following \cite{touchette2006,costeniuc2006generalized,costeniuc2005generalized,touchette2010b}, the intuition underlying the construction of the proposed generalized multifractal formalism
is to \emph{lift} $\Dld $ by a known and well-chosen function $g$, to perform a
Legendre-transform-type estimate of the \emph{lifted} spectrum, and then to subtract $g$ to yield a
new, sharper bound  for $\Dld$. This bound is not necessarily concave, and the method preserves the computational advantages of the Legendre transform.
% A generalized Legendre transform based on a modification of the LT and using auxiliary functions $g$ is first defined.

 \begin{definition}[Generalized Legendre Spectrum]
 \label{def:GLS}
 Let $g:\RR\mapsto\RR \cup \{ -\infty\}  $ be an {\em admissible} continuous function satisfying: $i)$ $g(x)\rightarrow -\infty$ when $x\rightarrow  \pm \infty$; $ii)$ $ g(x) \neq -\infty$ on  an \review{arbitrary} interval $I$ of nonempty interior; $iii)$ g is continuous on $I$.
The {generalized Legendre spectrum} is defined as:
\begin{equation}
\label{eq:def_glt}
\hg (h) \triangleq (\Dld+g)^{\star\star}(h)-g(h).
\end{equation}
\end{definition}

\review{Functions $g(x)=-x^2$ and $g(x)=-|x|$ constitute two simple examples of admissible
  functions, with conditions \emph{ii}) and \emph{iii}) being satisfied for any
  interval $I=[a,b]\subset\RR$ \cite{touchette2010b}.}

The generalized Legendre spectrum possesses the following property of key practical importance:
%The following proposition shows that $\hgs   = \Dld $ for suitably chosen functions  $g$.

\begin{proposition}
\label{prop:glt}
%Let $g:\RR\mapsto\RR$. Then,
$ \hgs \geq \Dld$, and equality holds if and only if
  $\Dld+g$ is concave.
\end{proposition}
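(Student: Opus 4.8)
The plan is to work directly from the definition $\hg(h) = (\Dld+g)^{\star\star}(h) - g(h)$ and invoke \emph{Property 2} of the Legendre transform (applied to the function $f = \Dld + g$), together with the elementary observation that, whatever other regularity $\Dld$ itself may fail to have, the relevant statements about $(\cdot)^{\star\star}$ hold for arbitrary (extended-real-valued, upper-semicontinuous) functions because the double Legendre transform is always the concave hull.

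\textbf{Proof.} By \emph{Property 2} applied to $f = \Dld + g$, we have $(\Dld+g)^{\star\star} \geq \Dld + g$ pointwise, with equality exactly when $\Dld + g$ is concave. Subtracting the finite-valued continuous function $g$ from both sides of the inequality preserves it (on the interval $I$ where $g$ is finite; outside $I$, $g = -\infty$ forces $\Dld + g = -\infty$ and the inequality is trivial, and one checks the lift-and-subtract construction is interpreted there in the natural way), giving
\begin{equation}
\hg(h) = (\Dld+g)^{\star\star}(h) - g(h) \;\geq\; \big(\Dld(h) + g(h)\big) - g(h) \;=\; \Dld(h)
\end{equation}
for all $h$. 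This proves $\hgs \geq \Dld$. For the equality condition: if $\Dld + g$ is concave, then \emph{Property 2} gives $(\Dld+g)^{\star\star} = \Dld + g$ identically, hence $\hg = (\Dld+g) - g = \Dld$. Conversely, if $\hg = \Dld$ everywhere, then $(\Dld+g)^{\star\star} = \Dld + g$ everywhere; since $(\Dld+g)^{\star\star}$ is always concave (it is a double Legendre transform, by \emph{Property 1}), $\Dld + g$ must be concave. \hfill$\square$

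The main subtlety to handle carefully — rather than a genuine obstacle — is that $\Dld$ is only upper-semicontinuous (Proposition~1) and may take the value $-\infty$, and $g$ takes the value $-\infty$ off the interval $I$, so I should make sure the statements ``$f^{\star\star} \geq f$'' and ``$f^{\star\star} = f \iff f$ concave'' are quoted in the right generality. This is exactly the setting of \emph{Property 2} as stated in the excerpt (which does not assume continuity or finiteness), so no extra work is needed beyond noting that $(\Dld+g)$ is a legitimate argument for the Legendre machinery; the upper-semicontinuity of $\Dld$ combined with the coercivity and continuity of $g$ on $I$ guarantees $(\Dld+g)^{\star\star}$ behaves as the standard concave envelope and no pathologies (such as the envelope being identically $+\infty$) arise. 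The remaining two steps — subtracting $g$ and reading off the equality case from \emph{Properties 1 and 2} — are immediate.
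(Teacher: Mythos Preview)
Your proof is correct and follows essentially the same approach as the paper: apply \emph{Property 2} to $f=\Dld+g$ to obtain $(\Dld+g)^{\star\star}\geq \Dld+g$ with equality iff $\Dld+g$ is concave, then subtract $g$. You spell out the equality case in both directions and add remarks on the upper-semicontinuity and $-\infty$ issues, which the paper omits, but the argument is the same.
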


\begin{proof}
By Property 2, $ (\Dld+g)^{\star\star}\geq  \Dld+g$, and equality holds if and only if $\Dld+g$ is concave.
Using this in (\ref{eq:def_glt}) implies $ \hgs =(\Dld+g)^{\star\star}-g \geq (\Dld+g) -g=\Dld$.
\end{proof}

From Proposition \ref{prop:glt}, $\hgs$, like  $\mathcal{L}$, yields an upper bound for $\Dld$. However, unlike $\mathcal{L}$, $\hgs$ can be \emph{nonconcave} and can hence potentially provide a better bound. Indeed,
if $ \Dld +g$ is a concave function but $\Dld$ is nonconcave, then  % (this requires $g$ to be concave),
$\hgs$ is nonconcave; in this case $\hgs = \Dld  $ while $\mathcal{L}\geq
\Dld$. % \sj{ Same remark here: how do we know that  $ \Dld +g$ is  concave ? }
The following proposition formalizes the intuition that $\hgs$  provides better upper bounds for $\Dld$ than $\mathcal{L}$.

\begin{proposition}
\label{prop:bound}
Let $g:\RR\mapsto\RR$ be a concave function. Then,
\begin{equation}
\label{eq:glt_bounds}    \forall h , \quad
\mathcal{L}(h) \geq \hgs(h)   \geq \Dld (h)  .
\end{equation}
\end{proposition}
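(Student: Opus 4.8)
The plan is to treat the two inequalities in \eqref{eq:glt_bounds} separately, using only elementary properties of the Legendre--Fenchel transform (Property 1 and Property 2) together with the concavity of $g$.

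First, the right-hand inequality $\hgs(h)\ge\Dld(h)$. This is essentially the content of Proposition~\ref{prop:glt}: by Property 2, $(\Dld+g)^{\star\star}\ge\Dld+g$ pointwise, and since $g$ is real-valued one may subtract it, giving $\hgs=(\Dld+g)^{\star\star}-g\ge(\Dld+g)-g=\Dld$. Note that this step does not use concavity of $g$, only finiteness.

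Second, and this is the substantive part, the left-hand inequality $\mathcal{L}(h)\ge\hgs(h)$. Using $\mathcal{L}=(\Dld)^{\star\star}$ and the definition \eqref{eq:def_glt} of $\hgs$, I would rewrite the desired inequality as
\[
(\Dld)^{\star\star}+g\;\ge\;(\Dld+g)^{\star\star}.
\]
The key fact to invoke is that the biconjugate $(\Dld+g)^{\star\star}$ is the \emph{smallest upper-semicontinuous concave function majorizing} $\Dld+g$ (cf.\ \cite{rockafellar1997convex,costeniuc2005generalized}). It therefore suffices to verify that the left-hand side is such a majorant: (i) it majorizes $\Dld+g$ because $(\Dld)^{\star\star}\ge\Dld$ by Property 2; (ii) it is concave, being the sum of the concave function $(\Dld)^{\star\star}$ (Property 1) and the concave function $g$; (iii) it is upper semicontinuous, since $(\Dld)^{\star\star}$ is upper semicontinuous as a biconjugate, while $g$, being a finite concave function on all of $\RR$, is continuous. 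Together these give $(\Dld)^{\star\star}+g\ge(\Dld+g)^{\star\star}$, and rearranging yields $\mathcal{L}\ge\hgs$, which completes the chain.

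The main obstacle I foresee is not conceptual but a matter of careful bookkeeping with infinite values: $\Dld$ may equal $-\infty$ on empty level sets, and a priori $(\Dld+g)^{\star\star}$ could take the value $+\infty$. Since $\Dld\le d$ and $g$ is real-valued, hence locally bounded, the sum $\Dld+g$ is locally bounded above; this keeps $(\Dld+g)^{\star\star}$ finite on the interior of its effective domain and makes the subtraction of $g$ in \eqref{eq:def_glt} unambiguous. The remaining degenerate cases are handled by the standard conventions for $\pm\infty$ and do not affect any of the inequalities above.
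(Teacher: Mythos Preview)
Your proof is correct and follows essentially the same route as the paper's. Both arguments reduce the left-hand inequality to $(\Dld)^{\star\star}+g\ge(\Dld+g)^{\star\star}$ and then invoke the characterization of the biconjugate as an envelope; the paper works with the description of $f^{\star\star}$ as the infimum over all affine majorants of $f$ and uses a set-inclusion argument on families of affine functions, whereas you use the equivalent description of $(\Dld+g)^{\star\star}$ as the least concave upper-semicontinuous majorant and verify directly that $(\Dld)^{\star\star}+g$ qualifies. These are two phrasings of the same idea, and your version is, if anything, slightly more streamlined since it uses the concavity of $g$ from the outset rather than passing through $g=g^{\star\star}$.
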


The proof is postponed to Appendix~\ref{sec:AppB}.

\subsubsection{\review{Intuition and illustration}}
\label{sec:example}

\review{The classical Legendre spectrum $\Dld^{\star\star}$ provides the \emph{concave envelope} of $\Dld$ obtained from the set of
\emph{supporting lines} of $\Dld$ \cite{touchette2010b,costeniuc2005generalized}.
When $g_{\gamma}(h)=-\gamma h^2$, the generalized version of the Legendre transform $\hgg$ provides the \emph{parabolic envelope} of $\Dld$, obtained from the set of \emph{supporting parabolas}.
Interested readers are referred to, e.g., \cite{touchette2010b,costeniuc2005generalized} for further
details.
}

\review{To gain intuition,} the generalized Legendre spectrum and its properties are illustrated with a simple example consisting of a nonconcave large deviation spectrum, composed of two parabolas
\begin{equation}
\label{eq:parab_d}
\Dld(h) =
\begin{cases}
1 - (h+1)^2 & \quad\text{if } -2< h<0,\\
1 - (h-1)^2 & \quad\text{if } 0 \leq h \leq 2,
\end{cases}
\end{equation}
and $\Dld(h)=-\infty$ otherwise.
$\mathcal{L}$ can be computed analytically and is compared to $\Dld$ in Fig.~\ref{fig:example_double_parabolic} (top row), showing that  $\mathcal{L}(h)> \Dld(h)$ for $h\in(-1,1)$, and that $\mathcal{L} $ does not recover the nonconcave parts of $\Dld$.
With $g_\gamma(h)=-\gamma h^2$, $\gamma\geq 0$, the generalized Legendre spectrum $ \hgg  $ can also be computed analytically
\begin{equation}
\label{eq:parab_gen_d}
\hgg(h)\!\!=\begin{cases}
1\!-\!(h\!+\!1)^2 & \!-2\!<\!h\! <\! -\frac{1}{1 +\gamma},\\
\frac{1}{1+\gamma}+\gamma h^2 & |h| \leq \frac{1}{1+\gamma},\\
1\!-\!(h\!-\!1)^2 & \frac{1}{1 +\gamma} \!<\! h \!<\! 2.\!\!\!\!\!\!\!\!\!\!\!\!
\end{cases}\!\!\!\!
\end{equation}
Fig.~\ref{fig:example_double_parabolic} (bottom left) compares $\Dld(h) + g_\gamma(h) $ (dotted lines) and
$(\Dld + g_\gamma)^{\star\star}(h)$ (solid lines), for several values of $\gamma$.
\review{It shows that the difference between $\Dld + g_\gamma$ and $(\Dld+g_\gamma)^{\star\star}$
(e.g., the area of the nonconcave region) decreases as $\gamma$ increases, and thus that the double
Legendre transform provides increasingly better estimates.
%Once the Legendre transform $(\Dld+g_\gamma)^{\star\star}$ is computed, the effect of the parabola is removed by subtracting$g_{\gamma}$.
Indeed, the
}
corresponding generalized Legendre spectra  $\hgg$  (bottom right panel)
show that $\hgg(h)=\Dld(h)$ for $ h$ outside of the interval $h\in(-\frac{1}{1+\gamma},\frac{1}{1+\gamma})$.
Within that interval, $\mathcal{L}(h)>\hgg(h)>\Dld(h)$, hence for any $ \gamma>0$, $\hgg$ provides a
more accurate bound for $\Dld$ than  $\mathcal{L}(h) $ does.
Further, one observes that when $\gamma\rightarrow\infty$, $\hgg(h)  \rightarrow \Dld(h)$, thus showing that tuning $\gamma$ permits to achieve arbitrarily sharp bounds.

\begin{figure}
\centering
\includegraphics[width=\linewidth,trim={1mm 3mm 0mm 0mm},clip]{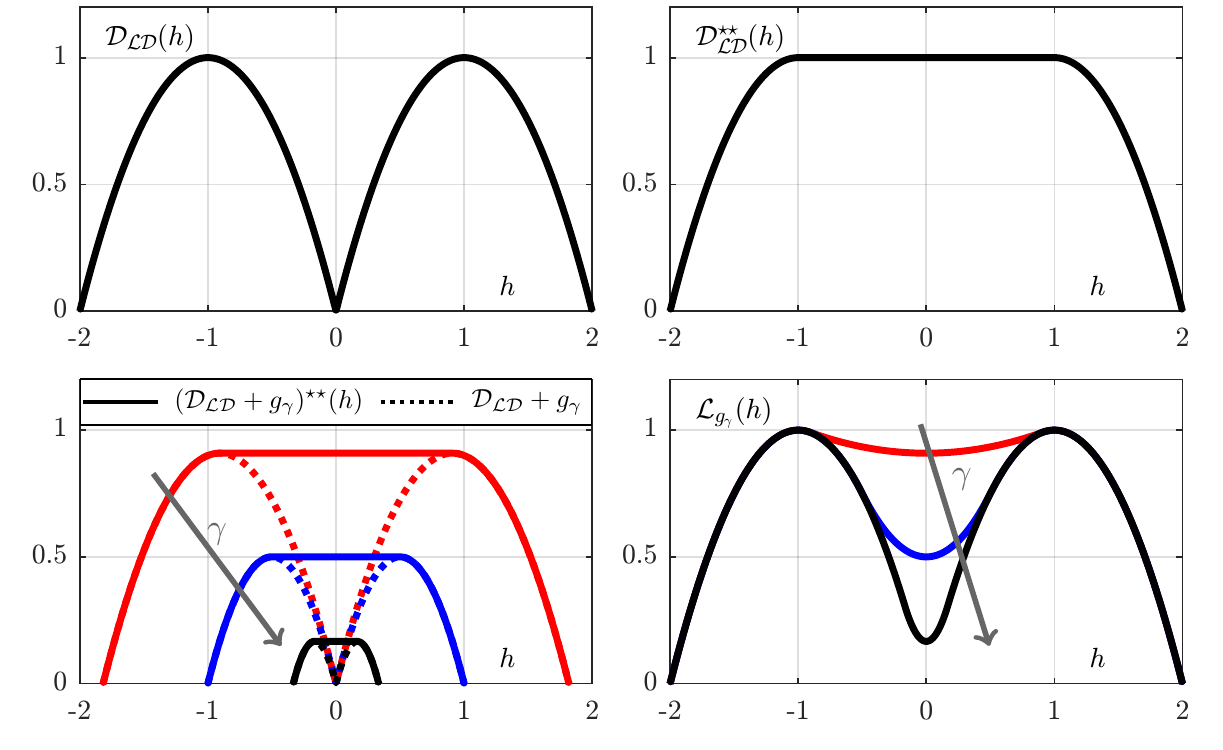}
\caption{{\bf Generalized Legendre spectra.} Top left: Large deviation spectrum $\Dld(h)$, defined as a double parabolic function.
  Top right:
  %double LT (i.e. Concave hull)
  $\mathcal{L}(h)=\Dld^{\star\star}(h)$.
  Bottom left: For several values of $\gamma$,
  $\Dld(h) + g_\gamma(h)$ (dashed lines) and % the double LTs
  $(\Dld(h) + g_\gamma)^{\star\star}$ (solid lines).
  Bottom right: Generalized Legendre spectra
$\hgg(h)$.}
\label{fig:example_double_parabolic}
\end{figure}

\subsubsection{Extension to a complete family of functions $g$}

The example above suggests that $\hgg $ can provide better bounds for $\Dld$ by using several functions $g$.
This intuition is formalized in the following key theoretical result, that shows that practically any $\Dld$ can be recovered from the minimum  of $\hgg$ obtained from a (possibly large) collection of dilated and translated templates of $ g $.

\begin{theorem}
\label{Theorem:global}
Let $g$ be an admissible function.
Let $\{g_{\gamma,\delta}(x) = g(\gamma(x-\delta))\}_{(\gamma,\delta)\in(\RR^+,\RR)}$ denote the collection of all dilated and translated templates of $g$.
For any dense countable set
%$\Upsilon \subset \{g_{\gamma,\delta}\}_{(\gamma,\delta)\in(\RR^+,\RR)}$,
$\Upsilon \subset \RR^+\times\RR$, and $\forall (\gamma,\delta)\in\Upsilon$,
\begin{equation}
\label{eq:dh_min}
{\mathcal L}_{g_{\gamma,\delta}}\review{(h)} \geq  \dmin\review{(h)} \triangleq \inf_{(\gamma,\delta)\in\Upsilon} \left\{ \mathcal{L}_{g_{\gamma,\delta}}\review{(h)} \right\} = \Dld\review{(h)}.
\end{equation}
\end{theorem}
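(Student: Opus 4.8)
The plan is to split \eqref{eq:dh_min} into its trivial and its substantial parts. The bound $\mathcal{L}_{g_{\gamma,\delta}}(h)\ge\dmin(h)$ is immediate from the definition of an infimum, and $\dmin(h)\ge\Dld(h)$ follows by applying Proposition~\ref{prop:glt} to each $g_{\gamma,\delta}$: every $g_{\gamma,\delta}$ is again admissible, since continuity, the limits at $\pm\infty$, and finiteness on an interval are all inherited from $g$ because $\gamma>0$. So the whole content is the reverse inequality $\dmin(h_0)\le\Dld(h_0)$ at an arbitrary fixed $h_0\in\RR$, which I would prove by showing that for every $\ep>0$ there is a pair $(\gamma,\delta)\in\Upsilon$ with $\mathcal{L}_{g_{\gamma,\delta}}(h_0)\le\Dld(h_0)+\ep$. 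Before starting I would record two harmless normalizations of $g$: adding a constant to $g$ leaves $\mathcal{L}_g$ unchanged (the constant cancels in \eqref{eq:def_glt}), and precomposing $g$ with a fixed shift merely reparametrizes $\delta\mapsto\delta-c/\gamma$, a homeomorphism of $\RR^+\times\RR$ that carries dense sets to dense sets; hence one may assume $g$ attains its supremum $0$ at $x=0$.

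The key is a \emph{trapping} estimate. A direct computation (bound the infimum over $q$ in the biconjugate by its $q=0$ term) gives, for any admissible $\tilde g$, the inequality $(\Dld+\tilde g)^{\star\star}(h_0)\le\sup_h\big(\Dld(h)+\tilde g(h)\big)$, hence by \eqref{eq:def_glt}
\[
\mathcal{L}_{\tilde g}(h_0)\ \le\ \sup_h\big(\Dld(h)+\tilde g(h)\big)\ -\ \tilde g(h_0).
\]
Fix $\ep>0$. By the upper-semicontinuity of $\Dld$ (Proposition~1) there is $r>0$ with $\Dld(h)\le\Dld(h_0)+\ep/2$ for $|h-h_0|\le r$, and since $\Dld\le d$ and $g(x)\to-\infty$ as $|x|\to\infty$, there is $\gamma_0$ so large that $g\big(\gamma(h-h_0)\big)\le\Dld(h_0)+\ep/2-d$ for all $|h-h_0|\ge r$ and all $\gamma\ge\gamma_0$. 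Splitting $\RR$ into $\{|h-h_0|<r\}$ (where $\Dld\le\Dld(h_0)+\ep/2$ and $g(\gamma(h-h_0))\le\sup g=0$) and $\{|h-h_0|\ge r\}$ (where $\Dld\le d$), one obtains $\sup_h\big(\Dld(h)+g(\gamma(h-h_0))\big)\le\Dld(h_0)+\ep/2$ for every $\gamma\ge\gamma_0$. Taking $\tilde g(h)=g_{\gamma,h_0}(h)=g(\gamma(h-h_0))$ in the displayed inequality and using $g_{\gamma,h_0}(h_0)=g(0)=0$ yields $\mathcal{L}_{g_{\gamma,h_0}}(h_0)\le\Dld(h_0)+\ep/2$. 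The picture is that a sufficiently steep localized bump $-g_{\gamma,h_0}$ makes the lifted spectrum $\Dld+g_{\gamma,h_0}$ so strongly peaked at $h_0$ that its concave envelope is flat there, so the Legendre machinery returns $\Dld$ itself. (The case $\Dld(h_0)=-\infty$ is similar and easier: then $\Dld\equiv-\infty$ on a neighbourhood of $h_0$ by Proposition~1 and $g_{\gamma,h_0}$ concentrates there, forcing $\mathcal{L}_{g_{\gamma,h_0}}(h_0)\to-\infty$ as $\gamma\to\infty$.)

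It remains to pass from the ideal parameter $(\gamma,h_0)$ to a genuine element of $\Upsilon$. Rather than invoke continuity of $(\gamma,\delta)\mapsto\mathcal{L}_{g_{\gamma,\delta}}(h_0)$, which is not evident, I would check that the estimate is stable: redoing the two-region split with $\delta$ only close to $h_0$ still gives $\sup_h\big(\Dld(h)+g(\gamma(h-\delta))\big)\le\Dld(h_0)+\ep/2$ for all large $\gamma$, so that $\mathcal{L}_{g_{\gamma,\delta}}(h_0)\le\Dld(h_0)+\ep/2-g\big(\gamma(h_0-\delta)\big)$; and since $g$ is continuous with $g(0)=0$, the term $-g\big(\gamma(h_0-\delta)\big)$ is at most $\ep/2$ as soon as $\gamma\,|h_0-\delta|$ lies below a threshold $\tau=\tau(\ep)>0$. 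Hence $\{(\gamma,\delta):\gamma>\gamma_0,\ \gamma\,|h_0-\delta|<\tau\}$ is a nonempty open subset of $\RR^+\times\RR$ on which $\mathcal{L}_{g_{\gamma,\delta}}(h_0)\le\Dld(h_0)+\ep$; by density $\Upsilon$ meets it, and letting $\ep\downarrow0$ gives $\dmin(h_0)\le\Dld(h_0)$. Together with the trivial half this proves \eqref{eq:dh_min}.

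The step I expect to be the main obstacle is precisely this uniform trapping construction: one has to make $-g_{\gamma,\delta}$ steep enough (large $\gamma$) to dominate the bounded, merely upper-semicontinuous $\Dld$ outside a small window around $h_0$, keep that window small enough that upper-semicontinuity of $\Dld$ has already bitten, and keep $|h_0-\delta|$ small relative to $1/\gamma$ so that the subtracted value $g_{\gamma,\delta}(h_0)$ stays within $\ep$ of $g(0)=0$ --- all simultaneously, while landing in the prescribed dense set $\Upsilon$ and while covering the degenerate range where $\Dld=-\infty$. Carrying this out quantitatively for a general admissible $g$ (not just $g(x)=-x^2$) is the only genuine work; the rest is either definitional or a one-line use of Property~2 and Proposition~\ref{prop:glt}.
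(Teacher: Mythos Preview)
Your argument is correct and follows essentially the same route as the paper's proof: normalize $g$ so that $\sup g = g(0) = 0$, then combine upper-semicontinuity of $\Dld$ near $h_0$ with the crude bound $\Dld\le d$ away from $h_0$ to trap $\Dld + g_{\gamma,\delta}$ below $\Dld(h_0)+\ep$ everywhere, so that its concave hull at $h_0$ (equivalently, your $q=0$ bound on the biconjugate) is below that level as well, with the case $\Dld(h_0)=-\infty$ handled separately. Your treatment of the density of $\Upsilon$ --- showing the trapping inequality persists on an explicit open set of parameters $(\gamma,\delta)$ --- is in fact more careful than the paper's, which dispatches this point in a parenthetical ``by continuity, it is clear.''
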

The proof is detailed in Appendix~\ref{sec:AppC}.
\review{In (\ref{eq:dh_min}), the $\inf$, for different values of $h$, may be attained for different $(\gamma,\delta)$, an outcome of major importance in practice as further discussed in Section~\ref{sec:tuning}.}

\subsection{Generalized  Multifractal Formalism: Definition}
\label{sec:genmf}
It will now be explained how the classical multifractal formalism (recalled in Section \ref{sec:mff}) must be modified to permit the actual estimation from data of the generalized Legendre spectrum  $\hg$.

First, new multiscale quantities, the \emph{generalized wavelet leaders}, need to be defined  for measuring  $\hg$, as the counterpart of the wavelet leaders for measuring $\mathcal{L}$.

\begin{definition}[Generalized Wavelet Leaders]
For an admissible function $g$ and $ q \in \RR$, the {generalized wavelet leaders}
$L^{{(q,g)}}_{j,k} $ are defined %
%, from the wavelet leaders,
 as follows.
Let $c_{1,0} $ be defined from the linear regression
\begin{equation}
\label{eq:def_c1}
\frac{1}{n_j}\sum_{k=1}^{n_j} \log_2 L_{j,k} = c_{1,0} + c_1 j,
\end{equation}
\begin{equation}
\label{eq:def_phi}
% \phi_{\lambda_j(y)}\triangleq\frac{\log_2 (L_{\lambda_j(y)}) - \log_2(\kappa(y))}{-j}.
\makebox{ then } \phi_{j,k}\triangleq\frac{\log_2 (L_{j,k}) -  c_{1,0}}{-j},
\end{equation}
\begin{equation}
\label{eq:defp}
\makebox{ and }  L^{{(q,g)}}_{j,k} \triangleq
2^{-j\left(q\phi_{j,k} -  g\left(\phi_{j,k}\right)\right)}.
\end{equation}
\end{definition}
The occurrence of $c_{1,0}$ stems from interpreting \eqref{eq:leader_decay} as stating that $ L_{\lambda_{j,k_y}} \sim \kappa(y)2^{-jh(y)}$, and replacing pointwise estimates by averages.

Second, from these $  L^{{(q,g)}}_{j,k}  $, a generalized multifractal formalism is devised and shown to yield a tight upper bound for $\Dld$, the key theoretical contribution of this work.
Let $S_{g}(q, j)$ denote the generalized structure functions,
\begin{equation}
\label{eq:sf_gamma}
S_{g}(q, j) = \frac{1}{n_j} \sum_{k=1}^{n_j} L_{j,k}^{(q,g)} % \;\stackrel{j\rightarrow\infty}{\sim}\; 2^{j\zeta_{g}(q)}.
\end{equation}
and let the generalized scaling exponents $\zeta_g(q) $ be defined as
\begin{equation}
\label{eq:segamma}
\zeta_g(q) =  \liminf_{j\rightarrow +\infty}\frac{\log_2S_g(q,j)}{-j}.
\end{equation}
\begin{theorem}[Generalized Multifractal Formalism]
\label{theorem:gmff}
The generalized Legendre spectrum $\hg$
can be computed
as the Legendre transform of  $\zeta_g$
\begin{equation}
\label{eq:gmf}
\hg(h)=\zeta_g^{\star}(h)-g(h).
\end{equation}
\end{theorem}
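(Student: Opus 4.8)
The plan is to reduce the theorem to the single identity
\begin{equation}
\label{eq:plan_key}
\zeta_g(q) = (\Dld+g)^{\star}(q) = \inf_h\left\{ d + qh - \Dld(h) - g(h) \right\},
\end{equation}
i.e.\ to show that lifting $\Dld$ by $g$ at the level of the spectrum is exactly what inserting the weights $2^{jg(\phi_{j,k})}$ into the structure function does at the level of the scaling exponents. Once \eqref{eq:plan_key} is available, the statement follows in two elementary steps: taking the Legendre transform of both sides gives $\zeta_g^{\star} = \big((\Dld+g)^{\star}\big)^{\star} = (\Dld+g)^{\star\star}$ by definition of the double Legendre transform, and subtracting $g$ yields $\zeta_g^{\star}(h)-g(h) = (\Dld+g)^{\star\star}(h)-g(h) = \hg(h)$ by Definition~\ref{def:GLS}. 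Note that no concavity is invoked here, which is precisely why $\hg$ may come out nonconcave.

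To establish \eqref{eq:plan_key} I would redo, carrying the function $g$ along, the scaling heuristic that led to \eqref{eq:zeta_Dh}. First, the renormalized exponents $\phi_{j,k}$ of \eqref{eq:def_phi} differ from $h(y,2^{-j})$ of \eqref{equ:hxat} only by the term $c_{1,0}/j$, which vanishes as $j\to\infty$; hence $\{\phi_{j,k}\}_k$ has the same large deviation spectrum $\Dld$, so there are $\sim 2^{j\Dld(h)}$ indices $k$ with $\phi_{j,k}\approx h$. By \eqref{eq:defp} each such index contributes $L_{j,k}^{(q,g)} = 2^{-j(q\phi_{j,k}-g(\phi_{j,k}))}\sim 2^{-j(qh-g(h))}$ to the sum in \eqref{eq:sf_gamma}, so, using $n_j\sim 2^{dj}$,
\begin{equation}
\label{eq:plan_sum}
S_g(q,j)\sim\sum_h 2^{-j(d+qh-g(h)-\Dld(h))},
\end{equation}
and in the fine-scale limit the smallest exponent dominates, which gives exactly \eqref{eq:plan_key}.

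The delicate points are those already implicit in the classical derivation of \eqref{eq:zeta_Dh}, now compounded by the presence of $g$: one must handle the double limit $\lim_{\epsilon\to0}\limsup_j$ defining $\Dld$ and justify exchanging it with the discretized supremum over $h$ in \eqref{eq:plan_sum}, and one must ensure the infimum in \eqref{eq:plan_key} is finite and attained. This is where admissibility of $g$ does the work: since $g(x)\to-\infty$ as $x\to\pm\infty$ while $\Dld(h)\le d$, the map $h\mapsto d+qh-g(h)-\Dld(h)$ is bounded below and coercive on the interval $I$ where $\Dld+g\neq-\infty$, so the sum in \eqref{eq:plan_sum} is effectively finite and its fine-scale behaviour is governed by its minimal-exponent term, and all Legendre transforms involved are well defined. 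I expect this coercivity and $\limsup$/$\liminf$ bookkeeping to be the only real obstacle; the passage from \eqref{eq:plan_key} to the theorem itself is immediate.
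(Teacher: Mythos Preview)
Your reduction to the identity $\zeta_g=(\Dld+g)^\star$ and the two-line deduction from it are correct, and they mirror exactly the heuristic the paper sketches immediately after stating the theorem. The paper's formal proof, however, proceeds along a genuinely different line: instead of counting leaders with $\phi_{j,k}\approx h$ via the large-deviation spectrum, it fixes a value $H$, takes the level set $E_H=\{x:h(x)=H\}$ of the H\"older exponent, and uses \eqref{eq:leader_decay} to extract, for each $x\in E_H$, a subsequence of dyadic cubes along which $L_\lambda\sim 2^{-Hj}$. A maximal subcollection of these cubes covers $E_H$; restricting the sum defining $S_g(q,j)$ to this cover yields a lower bound on $S_g$, and the definition of Hausdorff dimension then lower-bounds the number of cubes in the cover at each scale. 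The outcome is the one-sided inequality $\zeta_g(q)\le d+Hq-g(H)-\Dh(H)$ for all $q,H$, equivalently $\zeta_g^\star-g\ge\Dh$, which is precisely the direction needed for the upper-bound chain in the corollary. Your route gives the equality with $\Dld$ cleanly but leaves the $\limsup$/$\liminf$ bookkeeping at the same heuristic level as \eqref{eq:zeta_Dh}; the paper's route trades that for a rigorous covering argument tied to Hausdorff dimension, at the price of targeting $\Dh$ rather than $\Dld$ and delivering only one inequality.
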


The proof is detailed in Appendix~\ref{sec:AppD}.
Heuristically, the argumentation follows the intuition yielding the classical multifractal formalism (Section~\ref{sec:mff} (\ref{eq:zeta_Dh}-\ref{equ:leg_spec_1})):
Each location $y$ where $h(y)=h$ contributes to $S_{g}(q,j)$  as $\sim 2^{q  h(y)+ g( h(y))}$.
Therefore, $S_{g}(q,j)\sim 2^{j(d+q h - g( h)-\Dh(h))}$.
In the limit of fine scales,
$\zeta_{g}(q) = \inf_h \left\{ d+q h - g( h) -\Dh(h) \right\}  $ % = \inf_h \left\{ d+qh-\Dh_{g}(h) \right\} =\Dh_{g}^{\star}(q)$
and therefore
$\hg(h) = \inf_q\left\{d+qh-\zeta_g(q)\right\} - g(h)$.

A major consequence of Theorem~\ref{theorem:gmff} is a sequence of inequalities in the bounds for the multifractal spectrum.
\begin{corrolary}
For any dense countable set  $\Upsilon \subset \RR^+\times\RR$,
with $g$ an admissible function,
\begin{equation}
\label{eq:ineq}
\mathcal{L} \geq {\mathcal L}_{\cal Q} \geq \dmin  = \Dld \geq  \Dh.
\end{equation}
\end{corrolary}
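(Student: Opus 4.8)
\emph{Proof proposal.} The plan is to obtain the displayed chain of inequalities purely by concatenating results already available: the two theorems established in this paper (Theorem~\ref{Theorem:global} and Theorem~\ref{theorem:gmff}, together with Proposition~\ref{prop:glt}), and the comparison inequalities for the quantile spectrum and for the Hausdorff spectrum recalled in Section~\ref{sec:stateoftheart}. There is no genuinely new estimate to prove here; the corollary is a bookkeeping statement that collects the weight carried by Appendix~\ref{sec:AppC} and Appendix~\ref{sec:AppD}.

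First I would dispose of the two ``outer'' links. The inequality $\Dld \geq \Dh$ is the classical upper-bound property of the large deviation spectrum for wavelet leaders (see Section~\ref{sec:lds} and \cite{Riedi03,Jaffard2004}). The inequality $\mathcal{L} \geq \hq$, and more precisely the two-sided chain $\mathcal{L} \geq \hq \geq \Dld$, is the comparison between the Legendre spectrum, the quantile spectrum and the large deviation spectrum established in \cite{Abel} and recalled at the end of Section~\ref{sec:stateoftheart}; this holds pointwise in $h$ and for $\Dld$ of arbitrary (possibly non-monotone) shape, the quantile spectrum being only an upper bound, not necessarily tight, precisely on ranges where $\Dld$ possesses a local minimum. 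Second I would invoke Theorem~\ref{Theorem:global}: since $g$ is admissible and dilation/translation preserve admissibility, each template $g_{\gamma,\delta}$ is admissible, so Proposition~\ref{prop:glt} gives $\mathcal{L}_{g_{\gamma,\delta}} \geq \Dld$ for every $(\gamma,\delta)\in\Upsilon$; and Theorem~\ref{Theorem:global} sharpens the resulting inequality $\dmin = \inf_{(\gamma,\delta)\in\Upsilon}\mathcal{L}_{g_{\gamma,\delta}} \geq \Dld$ into the equality $\dmin = \Dld$ for any dense countable $\Upsilon \subset \RR^+\times\RR$. Combining, $\hq \geq \Dld = \dmin$, which supplies the middle link $\hq \geq \dmin$; chaining everything yields $\mathcal{L} \geq \hq \geq \dmin = \Dld \geq \Dh$, as claimed. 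Finally, Theorem~\ref{theorem:gmff} guarantees that each $\mathcal{L}_{g_{\gamma,\delta}}$ (hence $\dmin$) is the data-computable object $\zeta_{g_{\gamma,\delta}}^{\star} - g_{\gamma,\delta}$, so the same chain is the announced statement about estimators and not merely about abstract spectra.

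I do not expect a real obstacle inside the corollary itself. The only points needing care are: (i) verifying that $g_{\gamma,\delta}(x)=g(\gamma(x-\delta))$ inherits the three admissibility conditions of Definition~\ref{def:GLS} from $g$, so that Proposition~\ref{prop:glt} and Theorem~\ref{Theorem:global} legitimately apply to the entire family; and (ii) keeping the second link as an inequality $\hq \geq \dmin$ rather than an equality, since $\hq$ in general strictly exceeds $\Dld$ where $\Dld$ is locally decreasing-then-increasing, whereas $\dmin$ reconstructs $\Dld$ exactly there — this is exactly the gap the generalized formalism is designed to close. All the substantive analysis (the reverse inequality $\dmin \leq \Dld$ and the identity $\hg = \zeta_g^{\star}-g$) is deferred to Appendices~\ref{sec:AppC} and~\ref{sec:AppD} and is assumed here.
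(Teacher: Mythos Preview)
Your proposal is correct and matches the paper's intent: the corollary is stated without its own proof, as an immediate consequence of chaining Theorem~\ref{Theorem:global} (giving $\dmin=\Dld$) with the known inequalities $\mathcal{L}\geq\hq\geq\Dld\geq\Dh$ recalled in Section~\ref{sec:stateoftheart}. Your bookkeeping is exactly what the paper leaves implicit, and your remark that Theorem~\ref{theorem:gmff} is what makes $\dmin$ a data-computable estimator (rather than an abstract spectrum) correctly identifies why the corollary is placed after that theorem.
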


\subsection{Generalized Multifractal Formalism: Computation}
\label{sec:GMFFest}
\subsubsection{Estimation for finite-resolution data}
The generalized multifractal formalism can be computed for finite-resolution, discrete data $X$ % (instead of a function)
as follows.
First, the wavelet coefficients and  leaders of $X$ are computed for each scale from \eqref{wavlead}.
Second, the generalized wavelet leaders $L_{j,k}^{(q,g)}$ and structure functions are computed using \eqref{eq:def_c1} to \eqref{eq:sf_gamma}, for each scale $j$, for a range of positive and negative values for $q$.
Third, because \eqref{eq:segamma} essentially means that  $S_{g}(q, j) \sim K_q 2^{-j\zeta_g(q)}$, the exponents $\zeta_g(q)$ are estimated by linear regressions of $\log2\big(S_{g}(q, j)\big)$ versus scales $j$
\begin{equation}
\label{eq:zetahat}
\hat\zeta_g(q)=\sum_{j_1\leq j \leq j_2} w_j \log_2\big(S_{g}(q, j)\big)
\end{equation}
where $j_1$, $j_2$ delimit the range of scales where the regression is performed, and $w_j$ are suitable linear regression weights, cf. \cite{Wendt2007,Wendt2009} and references therein for details.
Finally, applying the  Legendre transform as in \eqref{eq:gmf} provides $\hg$. %for the generalized concave hull is obtained from the Legendre transform of $\hat\zeta_g(q)$, cf. \eqref{eq:gmf}.
As an alternative to the direct numerical calculation of the Legendre transform, %  of $\hat\zeta_g(q)$, $\hat{\mathcal{G}}_g(h)$
$\hg$ can equivalently be obtained by a parametric formulation $\big(h(q), \hg(h(q))\big)$ similar to the original proposition in \cite{c89,Wendt2007}.

\subsubsection{Choosing the function $g$}
\label{semicont}

\review{In choosing $g$, the only fundamental requirement lies in its being an admissible function (cf. Definition~\ref{def:GLS}).
Further, Propositions~\ref{prop:glt} and \ref{prop:bound} advocate for the use of concave functions, therefore leaving a large freedom.
In principle, for each $\Dld $ with particular departures from concavity, there might exist a theoretically optimal $g$.
For instance, the example in Section~\ref{sec:example} may suggest that $ g(h) =  -\gamma |h| $ is optimal as $\Dld $ contains a nondifferentiable point.
However, adjusting $g$ to an unknown $ \Dld$ would require the design of a complex adaptive/iterative  strategy.}
\review{Instead,
in \cite{costeniuc2005generalized} (Theorem 5.2), it is proven that $ g(h) = -\gamma h^2$  can recover any nonconcave spectrum $\Dld(h)$ as long as the parameter $\gamma$ is large enough: $\gamma\geq\sup_h\Dld''(h)$.}
\review{Even though such a theoretical optimality relies on twice differentiable spectra, in practice, numerical simulations  reported in Sec. \ref{sec:results_1d} show that even for nonconcave spectra that are locally nondifferentiable, the generic choice  $ g(h) = -\gamma h^2$ remains as good as any ad hoc choice, thanks to the joint use of several different $\gamma$. }

\review{Therefore, aiming to propose a generic procedure that works for any a priori unknown $ \Dld$,
we promote for real-world applications the generic use of tunable collection of functions
\begin{equation}
\label{eq:g}
g_{\gamma,\delta}(h)=-\gamma(h-\delta)^2,\quad\gamma\geq 0,
\end{equation}
parametrized by the curvature and shift parameters $\gamma$ and $\delta$, respectively.
Section~\ref{sec:varyg} further comforts that this constitutes a versatile and generic enough choice.}

\subsubsection{Parameter tuning}
\label{sec:tuning}

The practical use of the generalized multifractal formalism proposed in Sections~\ref{sec:genmf} and \ref{sec:GMFFest} with the choice in \eqref{eq:g} for $g$ implies the selection of three parameters $q$, $\gamma$ and $\delta$, which can not be tuned independently.
In principle, a large range of values of $\gamma$ (ideally, up to $\gamma\rightarrow\infty$) are needed to allow for
multifractal spectra of any nonconcave shape to be estimated.
Also in principle, a large set of values of $q$, both positive and negative, is needed to recover the spectrum on all of its support (cf., e.g., \cite{Jaffard2004,Wendt2007}).
\review{In practice, however, large values for both $\gamma$ and $|q|$ give rise
  to a well-known
  numerical issue in multifractal analysis: the so-called linearization effect, extensively studied
  in e.g. \cite{m74,lashermes_new_2004}.
  In an nutshell, large values of $|q|$ and/or $\gamma$ cause the sum in
  (\ref{eq:sf_gamma}) to be dominated by the largest $L_{j,k}$ and hence to be heavily biased, cf. \cite{lashermes_new_2004}.
%  For more details, interested readers are referred to e.g. \cite{lashermes_new_2004}.
}
This is illustrated in Fig.~\ref{fig:effect_params} (left), showing that estimates for large values of $\gamma$ become increasingly biased towards the limits of the support of the spectrum (with large values $|q|$).
% \sj{ As already mentioned, in some cases, the use of large $\gamma$s can be replaced by a single function  $g$ having an angle at the top. I suggest to compare numerically both approaches. }
The numerical issues can be mitigated by restricting $q$ to a narrow range of smaller values. To compensate for the resulting restriction to a smaller range of $h$,
%to smaller parts of the support of the spectrum
$g_{\gamma,\delta}$ is shifted using several values of $\delta$, i.e.,  $g_{\gamma,\delta}$ sweeps over
the full support of the spectrum, as illustrated in Fig.~\ref{fig:effect_params} (right).
Thus, a family of spectra $\mathcal{L}_{g_{\gamma,\delta}}$ is obtained, and their infimum is taken to produce
the final estimates  $\hg$ in \eqref{eq:dh_min}.

\begin{figure}
  \centering
  \includegraphics[scale=1]{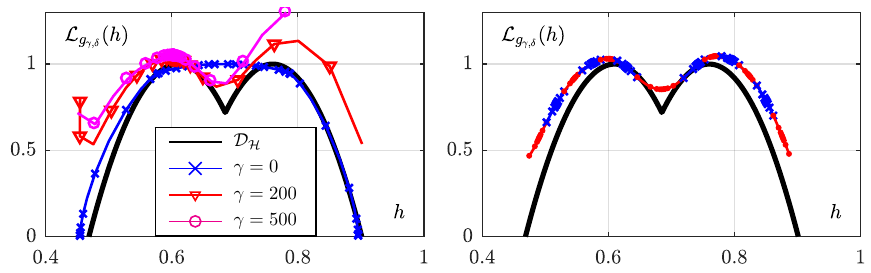}
\caption{\textbf{\boldmath Generalized Legendre spectrum ${\cal L}_{g_{\gamma,\delta}}$ estimation and parameter $\gamma$, $\delta$ and $q$ tuning.}
Left: Using a large range of $q$, $q\in\{-60,60\}$, and a few values of  $\gamma$ yields numerically unstable estimates of ${\cal L}_{g_{\gamma,\delta}}(h)$ (for illustration, $\delta=0.65$ is fixed);
Right: using a restricted range of $q\in[-4,4]$ and a large collection of $\delta\in[0.48,0.88]$ (marked by alternating colors and markers) yields far more stable and very satisfactory estimates of  of ${\cal L}_{g_{\gamma,\delta}}(h)$
(for illustration, $\gamma=200$ is fixed).
}
\label{fig:effect_params}
\end{figure}

\section{Performance assessment}
\label{sec:results}

\review{To assess the relevance and statistical efficiency of the proposed generalized multifractal formalism, several synthetic multifractal (1D or 2D) processes %, with concave and nonconcave spectra,
are analyzed.
Performance is assessed both for single sample paths and statistically as averages across Monte Carlo realizations.
%thus clearly showing that nonconcavity can be seen from a single observation chosen at random and is statistically robust from averages across realizations.
The proposed generalized Legendre spectra $\hg$ are compared against the conventional Legendre spectrum estimates $\mathcal{L}$, and against the quantile spectrum $\hq$, mentioned in Section~\ref{sec:stateoftheart} and studied in \cite{Abel,esser_acha_2017}.}

\subsection{Synthetic multifractal processes and functions}

\review{Several representative examples are used to cover a large variety of multifractal stochastic processes and deterministic functions.
Levy stable processes with a Brownian component and Thresholded Deterministic Wavelet Cascade are chosen as representative
examples % of multifractal processes or deterministic functions
amongst the very few cases having proven and theoretically studied nonconcave multifractal spectra.
Such examples are very different in nature: the former is an additive selfsimilar processes, while
the latter is constructed from multiplicative cascades, the generic multifractal paradigm.
Further,  concatenations of processes having different multifractal spectra are used as  potentially generic models for real-world data with possible changes of states.
Notably, these provide interesting models for mixtures of textures in images (2D fields).
Finally, the classical multifractal random walk, with concave multifractal spectra, is used to show that the proposed procedure does not create spurious nonconcavity in the estimates.}

\paragraph{Lévy-stable process with additional Brownian component}
Lévy-stable processes are stochastic processes, whose increments are independent and stationary, and have stable distributions, governed by parameter $\alpha, \;\alpha\in(0,2)$, cf., e.g.,  \cite{Samorod1994}.
A Brownian component is added to produce a more general class of Lévy processes, whose multifractal spectrum, theoretically established in \cite{Jaffard1999}, is in general nonconcave and noncontinuous
\begin{equation}
\label{eq:levy}
\Dh(h) =
\begin{cases}
  \alpha h &\text{if }h\in[0,1/2)\\
  1 &\text{if }h=1/2\\
  -\infty & \text{otherwise.}
\end{cases}
\end{equation}

\paragraph{Thresholded Deterministic Wavelet Cascade (DWC)}
Let $w\in[0,1]$. A binomial DWC $f$ is obtained by prescribing its wavelet coefficients as $c_{0,1}=1$, $c_{j,2k}=wc_{j-1,k}$, and $c_{j,2k+1}=(1-w)c_{j-1,k}$.
Its multifractal spectrum is given by
\begin{equation}
\label{eq:3}
\Dh(h)=(1-\alpha(h))\log_2(1-\alpha(h)) - \alpha(h)\log_2\alpha(h),
\end{equation}
where $\alpha(h)=(h+\log_2(1-w))/(\log_2(1-w)-\log_2w)$ \cite{m74}. %\cite{m74,barral_multifractal_2005}.
The thresholded-DWC $\breve f^{\theta}$ is defined by  applying a hard threshold $2^{-\theta j}$ to the wavelet coefficients of the DWC $f$:
$\breve c^{\theta}_{j,k}=c_{j,k}\mathbf{1}_{|c_{j,k}|\geq 2^{-\theta j}}$ \cite{Seuret2006}.
The multifractal spectra $\Dh(h)$
and $\breve \Dh^{\theta}(h)$ of $f$ and $\breve f^{\theta}$ are related as $\breve \Dh^{\theta}=(\sup(\Dh(h), \Dh(\Omega^{-1}(h)))$, with
$\omega$ the increasing
function $u \in [\theta, h_{max}] \rightarrow \theta (u + \log(1-w))/(\theta + \log(1-w)) \in (0,\infty)$
\cite[Theorem 5.2]{Seuret2006}.
A thresholded-DWC  is thus characterized by a nonconcave multifractal spectrum.

\paragraph{Concatenation of Multifractal Random Walk (MRW)}
MRW is defined from two independent Gaussian processes, with specific covariance structures chosen to mimic that of multiplicative cascades.
The multifractal spectrum of d-dimensional MRW consists of a parabola: $ \Dh(h)= d - (h-(H+\lambda^2/2))^2/2\lambda^2$ \cite{bacry2001}.
% controlled by two parameters $H$ and $\lambda$ \cite{bacry2001}.
To simulate the situation where a nonconcave multifractal spectrum arises as a result
of nonstationarity, we concatenate two MRW with different multifractal spectra $\Dh_1$ and $\Dh_2$, yielding a  multifractal spectrum
 $\Dh(h)=\sup(\Dh_1(h),\Dh_2(h))$.

\subsection{Monte Carlo simulation set-up}

For each random process, $N_{MC}=100$ independent realizations are analyzed, with sample sizes of $N=2^{20}$ (for 1D processes) and $N_1\times N_2=2^{10}\times 2^{10}$ (for 2D processes). % , and report average estimates.
The synthesis parameters for the processes were set to: \emph{a)} $\alpha=1.25$ for the Lévy process, \emph{b)} $H_1=0.6$,
$H_2=0.75$, and $\lambda_1=\lambda_2=\sqrt{0.01}$ for the concatenation of MRW, and \emph{c)} $w=0.45$ for the binomial wavelet cascade.
A Daubechies wavelet with $N_{\psi}=3$ vanishing moments was used, and scaling exponents were computed using unweighted linear regressions over a range of scales that is hand-tuned for each example. The parameters $\gamma$ and $\delta$ were set
to $\gamma \in \{0,5,10,100,200,500\}$ and $\delta \in [c_1-0.3,c_1+0.3]$ (with $c_1$ as defined in \eqref{eq:def_c1} corresponding to the mode of $\mathcal{L}$, cf. \cite{Wendt2009}).
For the computation of $\hq$, following the method in \cite{Abel}, the quantiles for $\Dh$ and $h$ were computed over $600$ and $60$ intervals, respectively.

\subsection{Estimated (non necessarily concave) spectra}

\subsubsection{Logscale diagrams}
Fig.~\ref{fig:logscale} displays log-log plots of $S_{g_{\gamma,\delta}}(q, j)$, for several values of $q$, $\gamma$, and $\delta$ (for concatenated MRW).
It shows power-law decays of the $S_{g_{\gamma,\delta}}(q, j)$ across a wide range of scales and for any sets of parameters, as postulated in \eqref{eq:segamma}.
These empirical observations constitute crucial results that, per se, validate the practical applicability of the generalized multifractal formalism.
Notably, it permits the robust estimation of the generalized scaling exponents $\zeta_{g_{\gamma,\delta}}(q)$ from $\log_2 S_{g_{\gamma,\delta}}(q, j)$ by  linear regressions \eqref{eq:zetahat}.
\review{Similar plots are obtained for all tested processes.}

\begin{figure}
  \centering
  \includegraphics[scale=1]{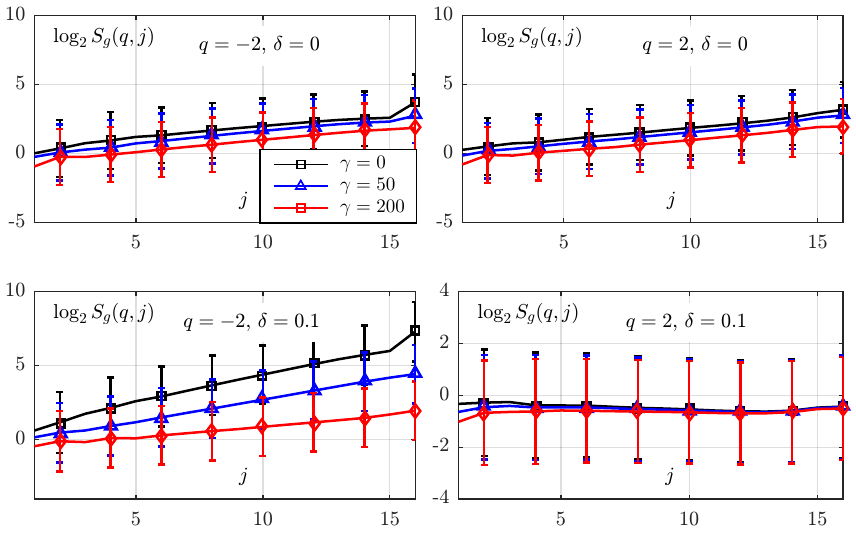}
\caption{\textbf{Logscale diagrams.} Plots of $\log_2 S_{g_{\gamma,\delta}}(q, j)$ vs. $\log_2 2^j$
  show satisfactory linear (hence scale-free) behaviors for any
  choice of parameters $\gamma$, $\delta$ and $q$ (averages over $100$ realizations and \review{95\%
  confidence intervals}).
}
\label{fig:logscale}
\end{figure}

\subsubsection{1D processes: single realization and ensemble estimation performance}
\label{sec:results_1d}

\review{Fig.~\ref{fig:results1d_single} (right column) reports $\dmin$ estimated from a single
  realization (left column) for each of the four examples.
Fig.~\ref{fig:results1d_mc} (left column) reports, for the three stochastic processes used here, estimates of
$\dmin$ averaged across independent realizations, as well as confidence intervals.
In both cases, estimates $\dmin$ are compared to the theoretical
multifractal spectra $\Dh$, to the estimated Legendre $\mathcal{L}$ and quantile $\hq$ spectra.}

\review{Fig.~\ref{fig:results1d_single} and Fig.~\ref{fig:results1d_mc} both show that nonconcavity is well evidenced by the proposed generalized multifractal spectra $\dmin$, on average as well as when applied to a single realization of data.
Further, for processes with concave spectra, estimates $\dmin$ are in excellent agreement with conventional
Legendre spectra $\mathcal{L}$, clearly showing that $\dmin$ does not introduce spurious nonconcave
regions. This is the case even on single realizations, suggesting that the absence of spurious
nonconcave regions is not a product of the smoothing performed by the ensemble average.}

\review{Fig.~\ref{fig:results1d_single} and Fig.~\ref{fig:results1d_mc} also show that the estimates yielded by $\dmin$ outperform the quantile-based $\hq$.
Focusing on the first example (top row), one can see that $\dmin$
provides estimates across the full support of the multifractal
spectrum while $\hq$ does not, which can be explained by the small
number of wavelet leaders actually corresponding to the missing $h$,
see also \cite[Fig. 9]{esser_acha_2017}.}
\review{Further, the estimates $\dmin$ are overall of better quality than those of $\hq$ for the
  nonconcave parts of $\Dh$. First, for the thresholded-DWC (Fig.~\ref{fig:results1d_single} second
  row), they provide a better estimation of the weakly nonconcave part on the
  decreasing branch caused by the thresholding. Second, they show a clear evidence of nonconcavity for the
  concatenated MRW (Fig.~\ref{fig:results1d_single} third row), while $\hq$  completely misses the
  nonconcavity, as expected for processes   whose spectra contain several local maxima.}

\review{At the statistical performance level, confidence intervals, being extremely narrow, see
  Fig.~\ref{fig:results1d_mc}, suggest overall low estimation variances.
Further, root mean-squared errors (rmse) of estimates for $\dmin$,  $\mathcal{L}$ and $\hq$ are
 compared in Fig.~\ref{fig:results_1d_estperf}, showing that $\dmin$ globally yields the lowest
rmse, notably in the nonconcave regions, where it has the lowest bias. }

\begin{figure}
  \centering
  \includegraphics[scale=1]{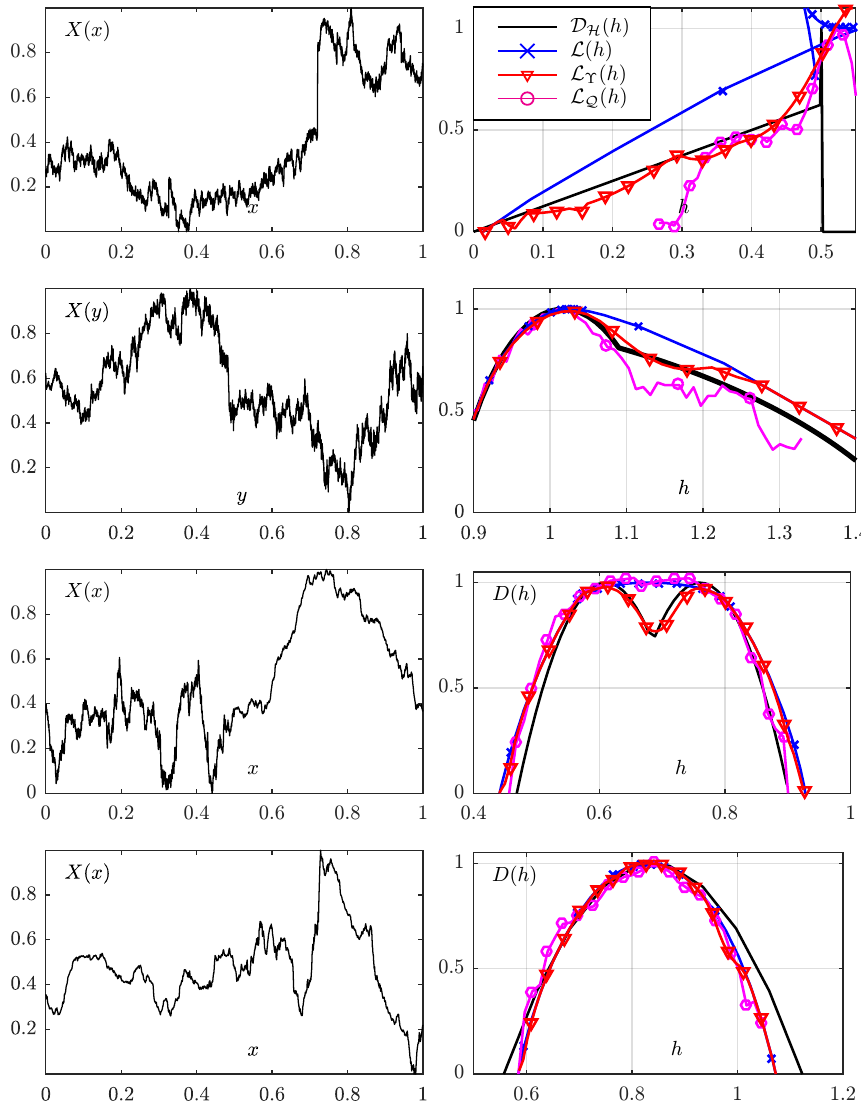}
\caption{\review{\textbf{1D multifractal processes: single realization spectra.} Sample realization (left
  column) and multifractal spectra (right column) \review{obtained from a single realization of (from top to bottom)} Levy process with Brownian component, thresholded
  binomial wavelet cascade, concatenated MRW, and homogeneous MRW.}}
\label{fig:results1d_single}
\end{figure}

\begin{figure}
  \centering
  \includegraphics[scale=1]{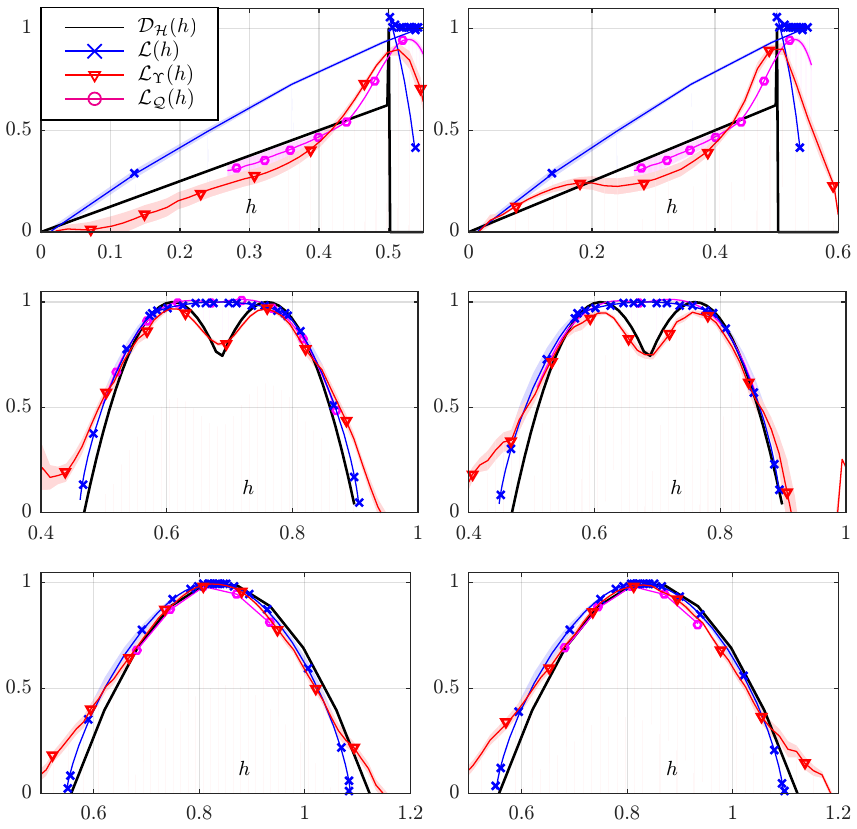}
\caption{\review{\textbf{1D multifractal processes: Monte Carlo-averaged spectra.} Estimated spectra with $g(h)=-\gamma h^2$ (left) and $g(h)=-\gamma |h|$, obtained as averages over 100 realizations, for (from top to bottom) Levy process with Brownian component, concatenated MRW, homogeneous MRW.
95\% confidence bands are reported using lightly shaded areas}.}
\label{fig:results1d_mc}
\end{figure}

\subsubsection{Varying function $g$}
\label{sec:varyg}
\review{Fig. \ref{fig:results1d_mc} (right column) further shows that estimated spectra for the three stochastic processes produced with the choice $g(h)=\gamma|h-\delta|$ are essentially equivalent to those obtained with the promoted choice $g(h)=\gamma (h-\delta)^2$ (left column).
This indicates that, though it may a priori appear as a critical issue, selecting the function $g$ turns out to have far less impact on the results than  expected.
It thus comforts the choice, proposed in \cite{costeniuc2005generalized} and
promoted here, of using a parabolic $g$ as an universal admissible function.}

\begin{figure}[t]
  \centering
  \includegraphics[scale=1]{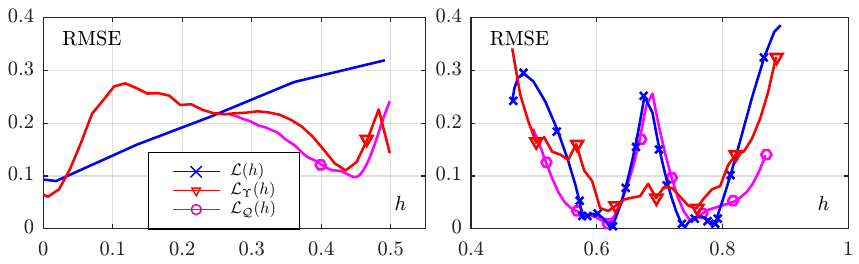}
\caption{\review{\textbf{1D multifractal processes: estimation performance.} Root mean-squared error
    (rmse) for the estimated multifractal spectra, for  $\alpha$-stable Lévy process with Brownian
    component (left), and concatenated MRW (right).}}
  \label{fig:results_1d_estperf}
\end{figure}

\subsubsection{2D processes}
\label{sec:2Dest}
We further show that the generalized multifractal spectrum approach is also operational for the analysis of images (i.e., $d=2$),
with extension to higher dimensions being straightforward and only requiring to substitute higher dimensional wavelet leaders (here, 2D leaders, cf., e.g., \cite{Wendt2009}) for the 1D leaders in \eqref{eq:def_phi}.
Fig.~\ref{fig:results2d} shows a synthetic image produced as the concatenation of two patches of MRW (left), its theoretical multifractal spectrum $\Dh$ and the corresponding estimates $\mathcal{L}$ and $\hg$ (right; averages over $100$ independent realizations).
Fig.~\ref{fig:results2d}  illustrates that $\dmin$ yields an excellent estimate for $\Dh$, and in particular a very satisfactory performance for the nonconcave region of the multifractal spectrum.
This example also suggests the use of estimators of nonconcave multifractal spectra as methods to detect nonstationarities.
It is indeed not obvious to the naked eye that the texture in Fig.~\ref{fig:results2d} (left) is formed by two patches with distinct regularity properties.
The information provided by $\dmin$
can thus shed light onto this situation and, for instance, suggest that segmentation and/or further analysis is needed.

\begin{figure}[t]
  \centering
  \includegraphics[scale=1]{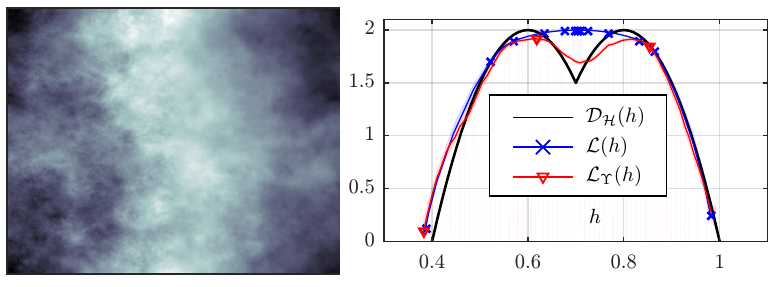}
\caption{\textbf{2D multifractal processes.} Sample realization (left column) and   multifractal
  spectra (right column; averages over $100$ realizations) for the horizontal concatenation of two
  rectangular 2D MRW.
\review{95\% confidence bands are reported in lightly shaded areas.
}
}
\label{fig:results2d}
\end{figure}

\subsubsection{Conclusions}
\label{sec}

Overall, this set of simulations unambiguously demonstrates that the proposed generalized multifractal
formalism i) can be used to estimate concave as well as nonconcave multifractal spectra, and ii) significantly improves the estimation over the state of the art estimates $\mathcal{L}$ and $\hq$, by being closer to the theoretical $\Dh$,
 confirming numerically that $\mathcal{L}(h)$ {\scriptsize$\gtrsim$} $\dmin(h)$ {\scriptsize$\gtrsim$} $\Dh(h)$, as expected from \eqref{eq:ineq}.

\section{Real world data}
We finally illustrate the application of the proposed generalized multifractal formalism to the analysis of a real-world satellite image corresponding to a patch of a hyperspectral image of the Moffett field, acquired by the AVIRIS instrument (spectral band 90), shown in Fig.~\ref{fig:results_moffet} (left).
The analysis was performed using parameters
$\gamma\in\{0,100,500,750\}$, $\delta\in[0.6,1.2]$, and $q\in[-10,10]$.
The estimates $\mathcal{L}$  and $\dmin$ are plotted in Fig.~\ref{fig:results_moffet} (right) and lead to the following conclusions.
First, the smooth, well-behaved and artifact-free shape of the estimated multifractal spectra and its large support suggest that a multifractal model can be reasonably assumed for the image.
Second, the Legendre spectrum $\mathcal{L}$ and generalized Legendre spectrum $\dmin$ coincide for a large part of their support, indicating that estimates are robust and stable.
Third, there is a clear difference between $\mathcal{L}(h)$  and $\dmin(h)$ for the interval
$h\in[0.98, 1.1]$: indeed, the proposed method allows to detect a nonconcave behavior of the
multifractal spectrum for this interval, leading to an estimate $\dmin$ with two modes that
are clearly separated by a pronounced, nonconcave dip.
In the absence of a ground truth, we cannot
conclude on the phenomenological origin of the observed nonconcave spectrum, although the example
studied in Section \ref{sec:2Dest} could suggest that the nonconcavity reflects the existence of two
visually discernable textures in the image (a rougher texture on the lower-right corner, and
a smoother one on the upper-left).
Nonetheless, we can conclude that the generalized multifractal
formalism enables the detection  of nonconcave multifractal spectrum  from real-world data, whatever
its precise origin, while it remains hidden when use is made of the conventional Legendre spectrum
$\mathcal{L}$.

\begin{figure}[t]
  \centering
  \includegraphics[scale=1]{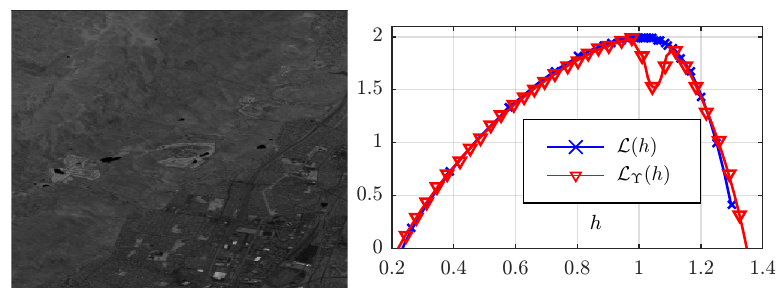}
\caption{\textbf{Real-world data.} Satellite image (left column) and multifractal spectrum estimates $\mathcal{L}(h)$ and $\dmin(h)$ (right column): The generalized spectrum $\dmin(h)$ clearly reveals the existence of a phenomenon leading to a nonconcave multifractal spectrum.
}
\label{fig:results_moffet}
\end{figure}

\section{Conclusions and Perspectives}
We proposed a generalized multifractal formalism that can be used to accurately
estimate nonconcave multifractal spectra. This formalism is based on a \review{generalized version} of the Legendre
transform  underlying the conventional multifractal formalism and leads to
numerically stable, yet not necessarily concave, estimates for the multifractal spectrum. This
\review{formalism} is inspired from generalized canonical ensembles studied in
statistical physics, and yields a multifractal formalism that inherits  numerical robustness while
capturing multifractal spectra of general shape. Moreover, we showed theoretically that this
generalized multifractal formalism leads to tighter bounds and is hence theoretically superior.
Based on these methodological developments, we devised a practical procedure for the estimation of nonconcave multifractal spectra from actual discrete, finite resolution data. The algorithm is operational and applicable to real-world data and will be made available.
The proposed methodology, theoretical results and practical algorithm have been  validated by
numerical simulations using both 1D and 2D synthetic multifractal processes, both with purely
concave multifractal spectra and with different types of nonconcavity. The results illustrate the
practical benefits of the proposed method and confirm numerically that it improves on
state-of-the-art techniques.
Future work will include the systematic analysis of real-world
biomedical signals and images, and the study of  recently introduced
second generation regularity exponents.

\appendix

\subsection{Proof of Proposition 1}
\label{sec:AppA}
  Denote by $\La_j$ the set of dyadic cubes of width $2^{-j}$. Let $\ep
  >0$ and $h$ be given. Let   $h'$  be such that $| h-h' |  \leq \ep$;
by definition of $\Dld$, the quantity
\(A_{\ep,j_n}=\textnormal{card} \{
  \la \in \La_{j_n} : 2^{-(h' + \ep ) j_n } \leq L_\la \leq 2^{-(h' - \ep ) j_n
  } \}
\)
satisfies
$ \forall \ep >0,  \exists j_n \rightarrow + \infty:\; A_{\ep,j_n} \geq 2^{(\D (h') - \ep) j_n}$.
Thus, for such  scales $j_n$, the quantity
\(
B_{\ep,j_n}=\textnormal{card} \{ \la \in \La_{j_n} : 2^{-(h +  2 \ep ) j_n } \leq L_\la \leq 2^{-(h
    - 2\ep ) j_n } \}
\)
satisfies
$  B_{\ep,j_n} \geq 2^{(\D (h') - \ep) j_n}$,
so that  $\D (h) \geq \D (h') - \ep$, hence the upper-semicontinuity.

\subsection{Proof of Proposition 2}
\label{sec:AppB}

The second inequality is given by Prop. 1. The first one can be written as
$f^{\star\star} + g^{\star\star} \geq (f+g)^{\star\star}$,
with $f=\Dld$ and $\mathcal{L}=f^{\star\star}$, noting that $g = g^{\star\star}$ (because $g$ is
concave).

Denote by $P_1$ the set of all affine functions. Then $f^{\star\star}$ can be written as
$  f^{\star\star} (x) = \inf \{ a(x): \; a \in P_1  \; \mbox{ and} \; a \geq f \}$
(see e.g. \cite{rockafellar1997convex,costeniuc2005generalized}).
Note that
$a \in P_1$  $a \geq f$, $b \in P_1$, $b \geq g$ implies that $a +b  \in P_1$  and $ a +b  \geq f  +g$.

 Therefore, if we define
 $A = \{   c = a+b \; \mbox{with}\; a, b \in P_1,   \;  a \geq f \; \mbox{and} \;  b \geq g\}$
 and
$ B = \{   c\in P_1 :  \; c \geq f + g\}$,
 it follows that that $A \subset B$.
 But
 $(f+g)^{\star\star} (x) =  \inf \{ c(x)\;  \mbox{with} \;  c \in B \}$ and
 $f^{\star\star}(x) + g^{\star\star}  (x) =  \inf \{c (x)  \; \mbox{with} \;  c \in A \}$.
 Since $A \subset B$, an infimum taken on $B$ is smaller than the same infimum on $A$; therefore
 $f^{\star\star} + g^{\star\star} \geq (f+g)^{\star\star}$, which completes the proof.

\subsection{Proof of Theorem~\ref{Theorem:global} }
\label{sec:AppC}

Without loss of generality, we can suppose that $g$ is nonpositive, and that, at its maximum, it takes the value 0. Let $D$ be a large deviation spectrum;
we first assume  that  $h_0$ is a point where $D(h_0) \geq 0 $.
Let $\ep >0$; since $D$ is upper-semicontinuous at $h_0$, $\exists \delta >0$  such that $ \forall h \in [h_0 - \delta, h_0 + \delta ] $, $D(h) \leq D(h_0) + \ep$. Note that one also has $\forall h, D(h) \leq d$.  It follows from the assumptions on $g$ that there exists a translation-dilation $g_\ep$ of $g$, such that
$ \forall h \notin  [h_0 -\ep, h_0 +\ep]$, $ g_\ep (h) \leq - d $ (and, by continuity, it is clear that the choice of $g_\ep $ can be restricted to a dense subset of the collection of all translates and dilates).
Thus, the function   $D(h) + g_\ep (h)$ is negative outside of $[h_0 -\ep, h_0 +\ep]$ and is less than $D(h_0) + g_\ep (h) + \ep $ in $[h_0 -\ep, h_0 +\ep]$.  Therefore it is everywhere less than $D(h_0) + \ep $; thus it is also the case  for its concave hull.  Consequently, the \review{generalized Legendre spectrum} of $D$ at $h_0$ is bounded by $D(h_0) + \ep$ (and it is also larger than $D(h_0)$). Since this is true $\forall \ep >0$, we can approximate arbitrarily well the value of $D$ at $h_0$.
 We now assume that $h_0$ is a point where $D(h_0)= -\infty $. By upper-semicontinuity, $D$ takes the value $-\infty$ in a neighborhood of $h_0$, and by picking  $g_\ep(h)=g( ah +b)$ with $a$ arbitrarily large, it is clear that the GLT of $D$ can take arbitrarily large negative values  at $h_0$.

\subsection{Proof of Theorem~\ref{theorem:gmff} }
\label{sec:AppD}

Let  $H$ be given and let $E_H = \{ x: \; h(x) =H\}$.   If $x \in E_H$, then
\eqref{eq:leader_decay} implies that there exists a sequence $\la_{j_n,k_x}$ of dyadic cubes such
that
 $x_0 \in \la_{j_n,k_x}$ and $-\log_2 (L_{\la_{j_n,k_x} }) / j_n \rightarrow h(x)$.  Let $J  >0$ be given; we pick a collection of maximal subcubes in the set
of cubes  $\la_{j_n,k_x}$ for $x \in E_H$, $j_n \geq J$ and $\left| - \log_2 (L_{\la_{j_n,k_x}}) / j_n - h(x) \right| \leq \ep $;
 and we denote by $\La_{j,H} $ the subcubes of this collection which are of width $2^{-j}$;  by construction,  $\bigcup_{j \geq J} \La_{j,H}$ is a covering of $E_H$.
 Restricting the sum in \eqref{sg} to the cubes $\la \in\La_{j,H} $  yields the  lower bound:
 \( \forall j,\; S_g (q,j)  \geq  2^{-dj} \sum_{ \lambda_{j',k'} \in \La_{j,H}}  (L_{j',k'})^q 2^{g(\phi_{j',k'}) j } . \)
 But, if $ \la_{j',k'} \in \La_{j,H}$, then  $ (L_{j',k'})^q \sim 2^{-Hqj}$ and $2^{g(\phi_{j',k'}) j } \sim 2^{g(H) j } $, so that
\begin{equation} \label{sg2}  \forall j, \qquad  S_g (q,j)  \geq  2^{-dj} \textnormal{card} ( \La_{j,H} ) 2^{-Hqj}   2^{g(H) j }  .  \end{equation}
Since  $\bigcup_{j \geq J} \La_{j,H}$ is a covering of $E_H$,  by definition of the Hausdorff dimension,
 \( \forall \ep >0,\; \sum_{ j \geq J} 2^{-j( { \mathcal D}_{\mathcal H}(H) -\ep ) } = + \infty  ,\)
so that there exists a sequence $j_l \rightarrow + \infty $ such that
\( \textnormal{card} ( \La_{j_l,H} )  2^{-j_l( { \mathcal D}_{\mathcal H}(H) -\ep ) }  \geq 1/ j_l^2 . \)
It follows from \eqref{sg2} that, for this sequence $j_l$,
 \[ S_g (q,j_l)  \geq  2^{-dj_l}  2^{-Hqj_l}   2^{g(H) j_l }  2^{-j_l( { \mathcal D}_{\mathcal H}(H) -\ep ) } / j_l^2 \]
 so that $ \zeta_g (q) = \liminf \log_2 ( S_g (q,j)) /(-j)  $
 satisfies
 \(\forall q, \forall H, \; \zeta_g (q)  \leq d +Hq -g(H) -{ \mathcal D}_{\mathcal H}(H)  \)
 and the result follows.

\bibliographystyle{IEEEtran}
\bibliography{biblio.bib}

\end{document}